 \newcommand{\beq}{\begin{equation}}
\newcommand{\eeq}{\end{equation}}
\numberwithin{equation}{section}
\newtheorem{theorem+}           {Theorem}      [section]
\newtheorem{definition+}  [theorem+]  {Definition}
\newtheorem{lemma+}  [theorem+]  {Lemma}
\newtheorem{corollary+}  [theorem+]  {Corollary}
\newtheorem{proposition+}  [theorem+]  {Proposition}
\newtheorem{example+}  [theorem+]  {Example}
\newenvironment{theorem}{\begin{theorem+}\sl}{\end{theorem+}\rm}
\newenvironment{definition}{\begin{definition+}\rm}{\end{definition+}\rm}
\newenvironment{lemma}{\begin{lemma+}\sl}{\end{lemma+}\rm}
\newenvironment{corollary}{\begin{corollary+}\sl}{\end{corollary+}\rm}
\newenvironment{proposition}{\begin{proposition+}\sl}{\end{proposition+}\rm}
\newenvironment{example}{\begin{example+}\rm}{\end{example+}\rm}
\newenvironment{proof}{\medbreak\noindent{\it Proof.}\rm}{\hfill$\square$\rm}
\newcommand{\vph}{\varphi}
\renewcommand{\Bbb}{\mathbb}
\newcommand{\Z}{{\Bbb  Z}}
\newcommand{\R}{{ \Bbb R}}
\newcommand{\Rnm}{{ \Bbb R}_-^n}
\newcommand{\Rnk}{{ \Bbb R}_-^k}
\newcommand{\Rnp}{{ \Bbb R}_+^n}
\newcommand{\Rnpt}{{ \Bbb R}_+^2}
\newcommand{\C}{{\Bbb  C}}
\newcommand{\Cn}{{\Bbb  C\sp n}}
\newcommand{\D}{{\Bbb D}}
\newcommand{\fra}{{\mathfrak a}}
\newcommand{\frad}{{\mathfrak a}_\bullet}
\newcommand{\F}{{\mathcal F}}
\newcommand{\cO}{{\mathcal O}}
\newcommand{\cH}{{\mathcal H}}
\newcommand{\cJ}{{\mathcal J}}
\newcommand{\cI}{{\mathcal I}}
\newcommand{\cR}{{\mathcal R}}
\newcommand{\cA}{{\mathcal A}_0}
\newcommand{\Df}{{\mathfrak D}}
\newcommand{\frj}{{\mathfrak j}}
\newcommand{\PSH}{{\operatorname{PSH}}}
\newcommand{\MW}{{\operatorname{MW_0}}}
\newcommand{\MWO}{{\operatorname{MW_0}}}
\newcommand{\AW}{{\operatorname{AW_0}}}
\newcommand{\AAW}{{\operatorname{AAW_0}}}
\newcommand{\UAW}{{\operatorname{IAW_0}}}
\newcommand{\OAW}{{\operatorname{SAW_0}}}
\newcommand{\Wx}{{\operatorname{W_0}}}
\begin{document}

\baselineskip=17pt

\begin{center}
{\Large\bf Analytic approximations of plurisubharmonic singularities}
\end{center}

\begin{center}
{\large Alexander Rashkovskii}
\end{center}

\begin{center}
{\it Tek/Nat, University of Stavanger, 4036 Stavanger, Norway\\
 E-mail: alexander.rashkovskii@uis.no}
\end{center}

\vskip1cm

\begin{abstract}\noindent
We study several classes of isolated singularities of plurisubharmonic functions that can be approximated by analytic singularities with control over their residual Monge--Amp\`ere masses.  They are characterized in terms of Green functions for Demailly's approximations, relative types, and valuations. Furthermore, the classes are shown to appear when studying graded families of ideals of analytic functions and the corresponding asymptotic multiplier ideals.
\medskip

{\sl Subject classification}: 32U05, 32U25, 32U35, 14B05

{\sl Key words}: plurisubharmonic singularity, analytic singularity, Demailly's approximation, Monge--Amp\`ere mass, multiplier ideal, asymptotic multiplier ideal
\end{abstract}

\section{Introduction}
Let $u$ be a plurisubharmonic ({\it psh}) function on a complex manifold $X$. We are interested in the asymptotic behavior of $u$ near a point $x$ with $u(x)=-\infty$. The {\it singularity} of $u$ at $x$ is the class of psh functions $v$ satisfying $v=u+O(1)$ near $x$. Our considerations are mostly local, so we assume $X$ to be a domain of $\Cn$ and $x=0$.

Already in dimension $1$, the asymptotic behavior of a psh singularity can be quite complicated. However, everything becomes easy when one considers functions that are harmonic in a punctured neighborhood of $0$. Namely, in this case,  $u(z)=\nu\log|z|+O(1)$ as $z\to 0$, where $\nu$ is the residual Riesz mass of $u$ at the origin.

In several variables, this corresponds to psh functions that are {\it maximal} outside $0$, which means, if $0$ is an isolated singularity point, that they satisfy the complex Monge--Amp\`ere equation $(dd^cu)^n=\tau\delta_0$, $\tau> 0$. Any isolated singularity has a 'maximization' that keeps its standard characteristics such as Lelong number, residual Monge--Amp\`ere mass, integrability index, etc. In this sense, we lose little by restricting ourselves to maximal psh functions.

Even in this class, the variety of singularities is enormous. For instance, their collection contains the functions   $u=a\log|F|+O(1)$ generated by holomorphic mappings $F$. Such {\it analytic singularities} are comparatively well studied, and we consider them as "simple" objects to be used for investigation of arbitrary maximal psh singularities.
In  \cite{FaJ}, \cite{BFaJ}, this idea was realized by extending the notion of a valuation from analytic to psh functions, and a number of results have been obtained there. In particular, such a dual object as formal psh functions on the space of valuations has been introduced and shown to carry most of the information on the singularity. In \cite{FaJ2}, this was used for computing the complex singularity exponent and multiplier ideals for any psh function in ${\Bbb C}^2$.

While the valuative approach is based significantly on technique of algebraic geometry, we would like here to exploit rather a direct approximation of psh singularities by analytic ones.
There are certain indications that this could work. First, we have the classical Lelong--Bremermann theorem on uniform approximation of continuous psh functions by functions  $\max_{i}a_i\log|f_i|$.
Second, a celebrated theorem due to Demailly states that any psh function $u$ can be approximated by a sequence of functions $\Df_ku$ with analytic singularities, converging to $u$ pointwise and in $L_{loc}^1$, and these {\it Demailly approximants} keep track on the singularity of $u$: for example, their Lelong numbers converge to the Lelong number of $u$; see Section~\ref{ssecDem} for details. Actually, the valuative approach makes systematical use of Demailly's approximation theorem as a bridge between psh and analytic functions.

On the other hand, assuming $0$ to be an isolated singularity point of $u$ (such functions will be called {\it weights}), it is not clear whether the residual Monge--Amp\`ere masses $(dd^c\Df_ku)^n(\{0\})$ of the Demailly approximants converge to that of $u$, even if $u$ is maximal outside $0$. The residual mass is an important characteristic of a psh singularity (for instance, $(dd^c\log|F|)^n(\{0\})$ is the multiplicity of the mapping $F:\: X\to\Cn$ at $0$), so this uncertainty is rather regretful. For example, since the functions $\Df_ku$ for any $u$ with zero Lelong number are locally bounded and hence of zero residual mass, the convergence of the masses would give a negative answer to the following well-known

\medskip\noindent
{\bf Zero Lelong Number Problem}\footnote{To the best of our knowledge, the question was first asked in a preprint version (1999) of \cite{FaG} and, independently, in the author's lectures at Summer Semester in Complex Analysis at Feza Gursey Institute, Istanbul (July 1999), see \cite{R-surv}.}:  {\sl Does there exist a psh function $u$ with zero Lelong number at $0$ and positive residual mass $(dd^cu)^n(\{0\})$?}

\medskip

In the present note, we study a few classes of psh singularities arising from different kinds of approximation by analytic ones, and we focus mainly on the question: {\sl How does analytic approximability of psh singularities affect their properties?}

\medskip

One of our tools is the notion of generalized {\it Green function} $G_\psi=G_{\psi,D}$ as the upper envelope of negative psh functions on a small neighborhood $D$ of $0$, with the singularity $\psi$ (see Section~\ref{ssec:Greenf} for details); we use it as a way of uniformization for families of the singularities. The choice of the domain $D$ is unimportant, it needs only to be bounded and hyperconvex.

 In spite of the convergence of the Demailly approximants $\Df_k\psi$ to $\psi$, there is no immediate reason for their Green functions to converge to $G_\psi$ (for instance, the Green functions of $\psi_k=\max\{\psi,-k\}$ equal zero for any $\psi$). It turns out that the convergence depends on the way $\psi$ can be approximated by analytic singularities.

Our first main result concerns psh weights $\psi$ that can be approximated by analytic weights $\psi_\epsilon$ in the sense that
$(1+\epsilon)\psi_\epsilon +O(1)\le  \psi\le (1-\epsilon)\psi_\epsilon +O(1)$ near $0$, see Definition~\ref{def:aas}.
We call them {\it asymptotically analytic} singularities. These include, for example, exponentially H\"{o}lder continuous weights and, more generally, {\it tame} weights as introduced in \cite{BFaJ}. In fact, we do not now if there exist maximal weights that are not asymptotically analytic.

\medskip\noindent
{\bf Theorem A.} {\sl Let $\psi$ be a maximal weight. Then the following conditions are equivalent:
\begin{enumerate} \item[(i)] $\psi$ has an asymptotically analytic singularity;
\item[(ii)] the sequence $G_{\Df_k\psi}/G_{\psi}$ converges to $1$, uniformly on $D\setminus\{0\}$, as $k\to\infty$;
\item[(iii)] for any psh function $u$ near $0$, we have $\sigma(\Df_ku,\psi)\to\sigma(u,\psi)$ as $k\to\infty$.
\end{enumerate}}
\medskip

Here  $\sigma(u,\psi)=\liminf_{z\to 0} {u(z)}/{\psi(z)}$ is the {\it relative type} of $u$ with respect to $\psi$ \cite{R7}. Since $\sigma(u,\log|z|)$ is the Lelong number of $u$, statement (iii) contains Demailly's result on convergence of the Lelong numbers of the approximants $\Df_ku$. Condition (ii) implies that, for any $N>0$, the functions $\max\{G_{\Df_k\psi},-N\}$ converge, as $k\to\infty$, to $\max\{G_{\psi},-N\}$, uniformly on $D$, which guarantees the convergence of the residual masses $(dd^c\Df_k\psi)^n(\{0\})$ to that of $\psi$, as well as continuity of $G_\psi$.

On the other hand, the uniform convergence in (ii) indicates that asymptotical analyticity may be an unnecessarily  restrictive condition for the convergence of the residual masses. The largest class of maximal weights $\psi$ with controlled residual masses of $\Df_k\psi$ is described as follows.

\medskip\noindent
{\bf Theorem B.} {\sl Let $\psi$ be a maximal weight. Then the following conditions are equivalent:
\begin{enumerate} \item[(i)] $\psi$ has an {\rm inf-analytic singularity}, i.e., can be represented as the limit of a decreasing sequence of {maximal} analytic weights;
\item[(ii)] $G_\psi=\inf_kG_{\Df_k\psi}$;
\item[(iii)] $(dd^c\psi)^n(\{0\})=\inf_k(dd^c\Df_k\psi)^n(\{0\})$;
\item[(iv)] there exist a countable set of divisorial valuations $\cR_i$ and positive numbers $s_i$ such that $\sigma(u,\psi)=\inf_i s_i\cR_i(u)$ for any function $u$ plurisubharmonic near $0$.
\end{enumerate}
In addition, for any inf-analytic singularity $\psi$, we have $(dd^c{\Df_{k}\,\psi})^n(\{0\})\to (dd^c\psi)^n(\{0\})$ and $G_{\Df_k\psi}\to G_\psi$ in $L^n(D)$.}
\medskip

In {(iv)}, the valuations $\cR_i$ are understood as the aforementioned extension of valuations from the ring $\cO_0$ to psh singularities \cite{BFaJ}. When both $u$ and $\psi$ have analytic singularities, the representation {(iv)} was proved, by algebraic methods, in \cite{LT}; note that in commutative algebra, the object corresponding to the relative type is known as the {Samuel asymptotic function}.
For arbitrary $u$ and tame $\psi$, the representation was established, by valuative methods, in \cite{BFaJ}. Note that asymptotical/inf-analyticity can be described in terms of formal psh functions on valuations as well. Namely, a maximal weight $\psi$ is inf-analytic if and only if it is generated, in the sense of \cite{BFaJ},  by a formal psh function $\hat\psi$ on the space $\mathcal V$ of valuations on $\cO_0$, and $\psi$ is asymptotically analytic if and only if $\hat\psi$ is continuous on $\mathcal V$  (see a remark after Corollary~\ref{cor:hA1}). We do not explore these issues here because it would require considerable involvement of the machinery of algebraic geometry, which does not fit well into the present exposition.

Nevertheless, we consider here certain algebraic structures because they turn out to invoke a new type of analytic approximability of psh singularities. Namely, this one arises from consideration of asymptotic multiplier ideals; the notion appeared in \cite{ELS1} and since then it has been of great interest in algebraic geometry.

We show first that, given a graded family $\fra_\bullet$ of primary ideals $\fra_k\subset\cO_0$, the Green functions $G_{\fra_k}$ for the singularities determined by the generators of $\fra_k$ converge, after the rescaling $G_{\fra_k}\mapsto k^{-1}G_{\fra_k}$, to a maximal weight $G_{\fra_\bullet}$, while the Green functions $G_{\frj_k}$ of the corresponding asymptotic multiplier ideals $\frj_k$ converge (also after the rescaling) to a maximal weight $G_{\frj_\bullet}\ge G_{\fra_\bullet}$. The multiplicities $e({\fra_\bullet})$ and $e({\frj_\bullet})$, in the sense of \cite{Mustata}, are just the Monge--Amp\`ere masses of $G_{\fra_\bullet}$ and $G_{\frj_\bullet}$, respectively, and the equality $e({\fra_\bullet})=e({\frj_\bullet})$ is equivalent to $G_{\fra_\bullet}=G_{\frj_\bullet}$. The latter holds true if $G_{\fra_\bullet}$ is inf-analytic, which implies, in particular, the result $e({\fra_\bullet})= e({\frj_\bullet})$ in the case of monomial ideals $\fra_k$, obtained in \cite{Mustata}.

For the ideals $\fra_k$ defined by the conditions $ \sigma(\log|f|,\psi)\ge k$ for a maximal weight $\psi$, we get

\medskip\noindent
{\bf Theorem C.} {\sl Let $\psi$ be a maximal weight. Then
\begin{enumerate} \item[(i)] $G_{\fra_\bullet}=G_\psi$ if and only if the weight $\psi$ has a {\rm sup-analytic singularity}, i.e., can be approximated {\sl from below} by maximal analytic singularities;
\item[(ii)] $G_{\frj_\bullet}=G_{\fra_\bullet}=G_\psi$ if and only if $\psi$ is both inf-analytic and sup-analytic;
\item[(iii)] for $\psi$ asymptotically analytic, the ratios $G_{\fra_k}/G_{\frj_k}$ converge to $1$, uniformly on $D\setminus\{0\}$, as $k\to\infty$.
\end{enumerate}}
\medskip

 Previously, the equality $e({\fra_\bullet})= e({\frj_\bullet})$ was known for the ideals $\fra_k=\{f:v(f)\ge k\}$ determined by quasimonomial (Abhyankar) valuations $v$  \cite{ELS2}, \cite{Mustata}. This is covered by Theorem~C(iii) because, as was shown in \cite{BFaJ}, for every such valuation $v$ there exists a maximal tame weight $\psi$ such that $v(f)=\sigma(\log|f|,\psi)$ for all $f\in\cO_0$. Such 'quasimonomial' weights $\psi$ form, however, just a small part of maximal tame weights.

  A new feature of our approach is that we show what the "limits" of the families $\fra_k$ and $\frj_k$ -- or, more precisely, the scaled limits of their logarithmic images $\log|\fra_k|$ and $\log|\frj_k|$ -- are: they are the collections of psh functions $u$ satisfying $\sigma(u,G_{\fra_\bullet})\ge 1$ and $\sigma(u,G_{\frj_\bullet})\ge 1$, respectively. Note that a valuative approach to asymptotic multiplier ideals was recently developed in \cite{JM}, \cite{JM1}.

\medskip

The paper is organized as follows. Section~2 contains a background on psh singularities. In Section~3 we introduce asymptotically analytic singularities. Convergence of the Green functions of Demailly's approximants is considered in Section~4. Specifically, the equivalence of (i) and (ii) in Theorem~A is proved in Section~4.2, and Theorem~B, except for its assertion (iv), is proved in Section~4.3. Asymptotic multiplier ideals are treated in Section~5 where Theorem~C is proved. Finally, in Section~6 we study relative types of Demailly's approximants and finish the proofs of Theorems~A and B; there we also represent inf-analytic weights as lower envelopes of certain analytic disk functionals.


\section{Preliminaries}

For basics on plurisubharmonic (psh) functions and the complex Monge--Amp\`ere operator, we refer the reader to \cite{Kl}. In this section, we recall some notions of particular importance for us and set the corresponding notation.

\subsection {Plurisubharmonic singularities}

Let $X$ be a complex manifold of dimension $n$, which we will think of as a domain in $\Cn$ containing the origin, and let $\PSH_0$ be the collection of germs of
psh functions at $0$. We will say that a psh function $u$ is {\it
singular} at $0$ if $u$ is not bounded (below) in any neighborhood of
the origin.

The equivalence class ${\rm cl}(u)$ of $u\in\PSH_0$ with respect to the relation "$u\sim v$ if $u(z)=v(z)+O(1)$" will be called the {\it singularity} of $u$; in \cite{Za0}, a closely related object was introduced under the name {"standard singularity"}.

Germs with {isolated} singularities at $0$ (i.e., locally bounded outside $0$) will be called {\it weights}, and their collection is denoted by $\Wx$. The Monge--Amp\`ere operator $(dd^c)^n$ is well defined on such functions; the {\it residual mass} of $\vph\in\Wx$ is
$(dd^c\vph)^n(\{0\})$.
Note that a solution to the Monge--Amp\`ere equation $(dd^cu)^n=\tau\delta_0$ need not have isolated singularity at $0$, see Example~\ref{ex:nonisolated}. Nevertheless, in this paper we restrict ourselves to the isolated singularities as the simplest and yet interesting case.

A weight $\vph\in \Wx$ is  called {\it maximal} if it is a maximal psh function on a punctured
neighborhood of the origin (satisfies $(dd^c\vph)^n=0$ outside $0$). A basic example is $\vph=\log|F|$ for an equidimensional holomorphic mapping $F$ with isolated zero at $0$, and in this case the residual mass of $\log|F|$ equals the multiplicity of $F$. The collection of all maximal weights will be denoted by $\MW$, and ${\rm cl}(\MW)$ is the collection of all {\it maximal singularities}.

\medskip

By Demailly's Comparison Theorem \cite{D1}, the inequality $\vph\le\psi$ for two weights $\vph,\psi\in\Wx$ implies $(dd^c\vph)^n(\{0\})\ge(dd^c\psi)^n(\{0\}$. A partial converse to this statement is the following Domination Principle.

\begin{lemma}\label{lem:zi} {\rm (\cite[Lemma 6.3]{R7}, \cite[Thm. 3.6]{ACCP})} Let $D$ be a bounded hyperconvex
domain and let $\vph_1, \vph_2\in \PSH(D)\cap
L_{loc}^{\infty}(D\setminus\{0\})$ be two functions such that $(dd^c\vph_i)^n=\tau\delta_0$ and $\vph_i|_{\partial D}=0$.
If $\vph_1\ge \vph_2$ in $D$, then $\vph_1=\vph_2$.
\end{lemma}

The following results on convergence of the residual masses of monotone sequences of maximal weights will be repeatedly used in the paper.

\begin{lemma}\label{lem:conv} Let $D$ be a bounded hyperconvex domain containing $0$, and let functions $\vph_j,\vph\in\PSH^-(D)$, $j=1, 2, \ldots$, be locally bounded and maximal on $D\setminus\{0\}$, and equal to $0$ on $\partial D$.
\begin{enumerate}
\item[(i)] If $\vph_j\ge\vph$ decrease to a function $\psi$, then $\psi=\vph$ iff $(dd^c\vph_j)^n(\{0\})\to(dd^c\vph)^n(\{0\})$;
\item[(ii)] If $\vph_j\le\vph$  increase to a function $\eta$, then $\eta^*=\vph$ iff $(dd^c\vph_j)^n(\{0\})\to(dd^c\vph)^n(\{0\})$;
    \end{enumerate}
here $\eta^*(x)=\limsup_{y\to x}\eta(y)$ is the upper semicontinuous regularization of $\eta$.
\end{lemma}

\begin{proof} {\sl (i) } Since the Monge--Amp\`ere operator is continuous with respect to decreasing sequences, the function $\psi$ is maximal on $D\setminus\{0\}$ and $(dd^c\vph_j)^n(\{0\})\to(dd^c\psi)^n(\{0\})$. Assuming $(dd^c\psi)^n(\{0\})=(dd^c\vph)^n(\{0\})$, we get the two functions, $\psi\ge\vph$, maximal on $D\setminus\{0\}$, equal to $0$ on $\partial D$, and with the same residual Monge--Amp\`ere mass at $0$. By the Domination Principle, $\psi=\vph$.

{\sl (ii) } Similar proof, the relation  $(dd^c\vph_j)^n(\{0\})\to(dd^c\eta^*)^n(\{0\})$ in this case being
due to Bedford--Taylor's result for increasing sequences of bounded psh functions, applied to the functions $\psi_{j}=\max\{\vph_j,-1\}$, since  $(dd^c\vph_j)^n(\{0\})=(dd^c\psi_{j})^n(D)$. (Alternatively, one can refer to the convergence results in the Cegrell class \cite{C2}, \cite{X}.)
\end{proof}

\medskip

Both the Domination Principle and Lemma \ref{lem:conv} can be deduced from the following result.

\begin{lemma}\label{lem:ACCP} {\rm (\rm\cite[Prop. 3.4]{NP})} Let $u,v$ be from the Cegrell class $\mathcal F$ in a bounded hyperconvex domain $D$, and $u\le v$ in $D$. Then
$$\int_D(v-u)^n (dd^cw)^n\le n!\,\int_D w\left[(dd^cu)^n-(dd^cv)^n\right]$$
for any $w\in\PSH(D)$, $0\le w\le 1$.
\end{lemma}


\subsection{Relative types and Lelong numbers}

For any function $u\in \PSH_0$,  its {\it type
relative to a weight} $\vph\in \MW$ \cite{R7} is
\beq\label{eq:reltype} \sigma(u,\vph)=\liminf_{z\to
0}\frac{u(z)}{\vph(z)}=\lim_{r\to-\infty}r^{-1}\Lambda(u,\vph,r),\eeq
where
\beq\label{eq:Lambda} \Lambda(u,\vph,r)=\sup\{u(z):\vph(z)<r\}.
\eeq
The type assumes finite, nonnegative values, and the maximality of $\vph$
implies the bound
\beq\label{eq:rtbound}u\le\sigma(u,\vph)\vph+O(1).\eeq
Evidently, $\sigma(u,\vph)=\sigma(v,\psi)$ for any $v\in {\rm cl}(u)$, $\psi\in {\rm cl}(\vph)$, so the relative type is a function on singularities.
When $\vph(z)=\log|z|$, the type $\sigma(u,\vph)$ is the Lelong number $\nu(u,0)$ of $u$ at $0$.
If $f$ is an analytic function and $F$ is an equidimensional holomorphic mapping, then, as follows from \cite{LT}, $\sigma(\log|f|,\log|F|)$ equals the {\it Samuel asymptotic function} \cite{Sa}
\beq\label{eq:Sam}\bar\nu_\cI(f)=\lim_{k\to\infty}k^{-1}\max\,\{m\in\Z_+:\:f^k\in \cI^m\}\eeq
for the ideal $\cI$ generated by the components of the mapping $F$.

Given an irreducible analytic variety $M\subset X$, the value
\beq\label{eq:gLel} \nu(u,M)=\inf\,\{\nu(u,y):\:y\in M\}\eeq
is the {\it generic Lelong number of $u$ along $M$}. By Siu's theorem,
$\nu(u,x)=\nu(u,M)$ for all $x\in M\setminus M'$, where $M'$ is a countable union of proper analytic subsets of $M$.


\subsection{Green and Green-like functions}\label{ssec:Greenf}
We will use the following extremal function introduced by V.
Zahariuta \cite{Za0}, see also \cite{Za}. For a
bounded hyperconvex domain $D$ containing $0$,  $PSH^-(D)$ will mean the collection of all negative psh functions on $D$.
Given a weight $\vph\in \MW$, denote
$$
G_\vph(z)=G_{\vph,D}(z)=\sup\{u(z):\: u\in PSH^-(D),\ \sigma(u,\vph)\ge 1\}.$$
The function is psh in $D$,
maximal in $D\setminus \{0\}$, $G_{\vph}\in{\rm cl}(\vph)$,
and $G_{\vph}= 0$ on $\partial D$; moreover, it is a
unique function with these properties.
Furthermore, if $\vph$ is continuous near $0$, then $G_{\vph}$ is continuous on $D$ \cite[Prop.~181]{Za0}, see also \cite[Thm.~1.4.6]{Za}; the continuity of $\phi\in\Wx$ is understood here as continuity of $\exp\phi$. We
will refer to this function as the {\it Green} {\it function
with singularity} $\vph$.
If $\vph(z)=\log|z|$, then  $G_{\vph}$ is the standard pluricomplex Green function $G_{0,D}$ of $D$ with pole at $0$.

Several 'maximization' procedures of non-maximal psh functions are described in \cite{R7}.
The notion of Green function $G_\vph$ is extended to arbitrary weights $\vph\in\Wx$ as
\beq\label{eq:cgf} G_\vph(z)=G_{\vph,D}(z) = \limsup_{y\to z}\sup\{v(y):\: v\in\PSH^-(D),\ v\le \vph+O(1)\},\eeq
the {\it complete greenification} of $\vph$.
It is a psh function in $D$, maximal on $D\setminus \{0\}$, equal to zero on $\partial D$; evidently, $\sigma(\vph, G_\vph)\ge 1$.
When $\vph\in\Wx\setminus\MW$, the singularity of $G_\vph$ can differ from that of $\vph$; nevertheless, the relative types and the residual Monge--Amp\`ere mass remain the same: $\sigma(\vph,\psi)=\sigma(G_\vph,\psi)$ for every $\psi\in\MW$ and $ (dd^c\vph)^n(\{0\})=(dd^cG_{\vph})^n(\{0\})$.

Furthermore, given a function $u\in\PSH_0$ and an arbitrary collection $P\subset\MW$,
\beq\label{eq:genind0}
h_{u}^P(z)=h^P_{u,D}(z)=\sup\,\{v(z):\:v\in PSH^-(D),\ \sigma(v,\psi) \ge \sigma(u,\psi)\ \forall\psi\in
P\}
\eeq
is the {\it type-greenification} of $u$ with respect to the collection $P$ \cite{R7}. When $\vph\in\Wx$, the function $h_{\vph}^P$ belongs to $\MW$, equals $0$ on $\partial D$, and satisfies $\sigma(h_{\vph}^P,\psi)= \sigma(\vph,\psi)$ for all $\psi\in P$. Such a function can also be represented as the best psh minorant of the family $\{G_\psi:\: \psi\in P,\ \sigma(\vph,\psi)\ge1\}$. The {\sl raison d'\^{e}tre} for the type-greenification is that it gives the best possible bound on $u\in\PSH_0$, $u\le h_{u}^P+O(1)$, when the only available information on $u$ is the values $\sigma(u,\psi)$ for all $\psi\in P$.

The functions are related by $G_\vph=h_{\vph}^{\MW}\le h_{\vph}^{P_1}\le h_{\vph}^{P_2}$ if $P_2\subset P_1\subset\MW$.


\subsection{Multiplier ideals and Demailly's approximations} \label{ssecDem}
Given $u\in\PSH(X)$, let $\cJ(u)$ denote the {\it multiplier ideal sheaf\:} for $u$, that is, the sheaf of ideals of germs $f\in\cO_X$ such that $|f|\,e^{-u}\in L^2_{loc}$. The sheaf is coherent, and for any pseudoconvex $D\Subset X$, the restriction of $\cJ(u)$ to $D$ is generated as $\cO_D$-module by any basis of the Hilbert space
$\cH(u)=\{f\in\cO(D):\:\ |f|\,e^{-u}\in L^2(D)\}$,
see \cite{Na}. A fundamental property of special importance for us is the Subadditivity Theorem \cite[Thm.~2.6]{DEL}
\beq\label{eq:SAT}\cJ(u+v)\subseteq\cJ(u)\cdot\cJ(v)\quad\forall u,v\in\PSH(X).\eeq
We refer to \cite{D6}, \cite{La} for detailed information on multiplier ideals and their applications to analysis and algebraic geometry.

The notion was used by Demailly for approximation of arbitrary psh functions by ones with analytic singularities.
Demailly's Approximation Theorem \cite{D2} says that, given a psh function $u$  on a bounded pseudoconvex domain $D$,  the functions
\beq\label{eq:appr1}\Df_ku=\frac1k\sup\{\log|f|:\:f\in\cO(D),\ \int_D |f|^2e^{-2ku}\,dV<1\}=\frac1{2k}\log\sum_{i=1}^\infty|f_{k,i}|^2,\eeq
where $\{f_{k,i}\}_i$ is an orthonormal basis for $\cH(ku)$, have the bounds
\beq\label{eq:Dap}
u(z)-\frac{C}{k}\le \Df_ku(z)\le \sup_{|\zeta-z|<r}u(\zeta)+\frac1k\log\frac{C}{r^n},\quad z\in D,
\eeq
with a constant $C>0$  independent of $u$ and $k$, provided $\{|\zeta-z|<r\}\subset D$.

By the strong Noetherian property of the coherent sheaf $\cJ(u)$, for any $D'\Subset D$ there exist finitely many functions $f_{k,i}$, $1\le i\le i_0=i_0(ku)$, such that
\beq\label{eq:appr3}\Df_ku(z)= \frac1{2k}\log\sum_{i=1}^{i_0}|f_{k,i}(z)|^2+O(1),\quad z\in D'.\eeq
As a consequence of (\ref{eq:Dap}), $\Df_ku$ converge to $u$ pointwise and in $L_{loc}^1$, and their Lelong numbers at any $x\in D$ tend to the Lelong numbers of $u$.


\section{Asymptotically analytic singularities}

\begin{definition}
A function $u\in \PSH_0$ has {\it  analytic singularity} if
$c\log|F|\in {\rm cl}(u)$ for some $c>0$ and a
holomorphic mapping $F$ from a neighborhood of $0$ to some $\C^N$. The collection of all {\it weights with analytic singularities} at $0$ (and thus $N\ge n$) is denoted by $\AW$.
\end{definition}

\begin{example} By (\ref{eq:appr3}), Demailly's approximants ${\Df_k u}$ have analytic singularities.
\end{example}

\begin{proposition}\label{prop:anmax}
Any isolated analytic singularity is maximal.
\end{proposition}

\begin{proof} Let $\vph=c\log|F|$. As is known, one can always find $n$
functions $\xi_1,\ldots, \xi_n$ (generic linear combinations of the components $F_j$ of
$F$) such that $\log|F|=\log|\xi|+O(1)$; the functions generate a so-called minimal reduction of the ideal $\cI(F_1,\ldots,F_N)$ \cite{NR}. By King's formula, $(dd^c\log|\xi|)^n=0$ outside the zero set of $F$. Therefore, $\vph\sim c\log|\xi|\in\MW$.
\end{proof}

\medskip

Nevertheless, when operating with a sequence of analytic weights, one should be careful in order to get the limit weight to be equivalent to a maximal one. This issue will be essential, for example, in treating inf/sup-analytic singularities in Sections~4 and 5.

\medskip
We are going to deal with singularities that are 'close' to analytic ones. The first class of such weights is defined as follows.

\begin{definition}\label{def:aas}A function  $\psi\in \PSH_0$ has {\it asymptotically  analytic singularity} if for every $\epsilon>0$ there exists a function $\psi_\epsilon$ with analytic singularity at $0$ such that
\beq\label{eq:ass}(1+\epsilon)\psi_\epsilon +O(1)\le  \psi\le (1-\epsilon)\psi_\epsilon +O(1).\eeq
The collection of all {\it asymptotically  analytic weights} will be denoted by $\AAW$.
\end{definition}

\medskip

Unlike analytic singularities, asymptotically analytic ones need not be maximal.  Take, for example,  $\psi(z)=\log|z|-|\log|z||^{1/2}$, then $(1+\epsilon)\log|z|-C_\epsilon\le\psi(z)\le\log|z|$, while its Green function $G_\psi$ in the unit ball is $\log|z|$; note however that $ \psi(z)/G_{\psi}(z)\to 1$ as $z\to 0$. Actually, this holds true for any asymptotically analytic weight:

\begin{proposition}\label{prop:nonmax} Let $G_{\psi}$ be the Green function (\ref{eq:cgf}) of a singularity $\psi\in\AAW$ for a bounded hyperconvex neighborhood $D$ of the origin. Then there exists the limit
$$\lim_{z\to 0}\frac{\psi(z)}{G_{\psi}(z)}=1.$$
As a consequence, for any $u\in\PSH_0$ one can set
$\sigma(u,\psi)=\sigma(u,G_{\psi})$.
\end{proposition}

\begin{proof} Given $\epsilon\in(0,1)$, choose a weight $\psi_\epsilon\in\AW$ and a constant $C_\epsilon>0$ such that
$ (1+\epsilon)\psi_\epsilon -C_\epsilon\le  \psi\le (1-\epsilon)\psi_\epsilon + C_\epsilon$.
Then
$ (1+\epsilon)G_{\psi_\epsilon}\le  G_{\psi}\le (1-\epsilon)G_{\psi_\epsilon}$,
and thus
$$\frac{1-3\epsilon}{1+\epsilon}\le \frac{\psi(z)}{G_{\psi}}\le \frac{1+4\epsilon}{1-\epsilon}$$
for all $z$ sufficiently close to $0$ because $\psi_\epsilon=G_{\psi_\epsilon}+O(1)$ by Proposition~\ref{prop:anmax}.
\end{proof}

\begin{example}\label{ex:tame}
According to \cite{BFaJ}, a continuous weight $\vph\in \Wx$ is called {\it
tame} if there exists a constant $C>0$ such that for every $t>C$ and
every analytic germ $f$ from the multiplier ideal ${\cal J}(t\vph)$
of $t\vph$ at $0$, one has $\log|f|\le (t-C)\vph+O(1)$.
Demailly's approximants $\Df_k\vph$ (\ref{eq:appr1}) of a tame weight $\vph$ satisfy, by \cite[Lemma 5.9]{BFaJ},  \beq\label{eq:Dapt}  \vph+O(1)\le\Df_k\vph\le (1-C_{\vph}/k)\vph+O(1)\eeq
near $0$; moreover, conditions (\ref{eq:Dapt}) characterize tame weights. Therefore, all tame weights are asymptotically  analytic.
\end{example}

\begin{example}
In particular, any exponentially H\"older continuous weight $\vph$, that is, satisfying for some $\beta>0$ the relation
\beq\label{eq:holder} e^{\vph(y)}-e^{\vph(z)}\le |y-z|^\beta\quad{\rm near\ }x,
\eeq
is tame \cite[Lemma 5.10]{BFaJ}. In particular, all weights with analytic singularities are tame.
\end{example}

\begin{example}\label{ex:mc}
Any {\it multicircular} weight $\vph\in\MWO$ (depending on $|z_1|,\ldots,|z_n|$ only) has asymptotically analytic singularity, which can be shown as follows. Since such a function $\vph$ is equivalent to its {\it indicator at $0$}, that is, the Green function for $\vph$ in the unit polydisk $\D^n$ \cite{LeR}, we can assume $\vph=G_\vph$. This implies $\vph(A_cz)=c\,\vph(z)$ for every $c>0$, where $$A_c(z)=(|z_1|^c,\ldots,|z_n|^c),\quad z\in\D^n.$$ Then the function $\tilde\vph(t)=\vph(e^{t_1},\ldots,e^{t_n})$ is convex and positive homogeneous in $\Rnm$, equal to zero on $\partial\Rnm$, and thus is the restriction to $\Rnm$ of the support function of a convex set $\Gamma\subset\Rnp$:
$$\tilde\vph(t)=\sup\{\langle a,t\rangle: a\in\Gamma\}, \quad t\in\Rnm.$$ Therefore, for any $\epsilon>0$ there exist positive integers $m$ and $N$ and monomials $z^{k(j)}$, $k(j)\in\Z_+^n$, $1\le j\le m$, such that
$|\vph(z)-\vph_\epsilon(z)|<\epsilon/2$ for all $z$ with $-1\le\vph(z)\le 0$, where
$$\vph_\epsilon(z)=N^{-1}\max_j\log|z^{k(j)}|.$$
Take any $w\in\D^n$ with $\vph(w)=t<-1$ and let $z=A_cw$ with $c=1/|2t|$. Then $\vph(z)=-1/2$, $A_1w=A_{|2t|}z$, and
$$|\vph(w)-\vph_\epsilon(w)|=|\vph(A_1w)-\vph_\epsilon(A_1w)|=|\vph(A_{|2t|}z)-\psi_\epsilon(A_{|2t|}z)|<|t|\epsilon.
$$
Since $\vph(w)=t$, this implies
$ (1+\epsilon)\vph(w)\le\vph_\epsilon(w)<(1-\epsilon)\vph(w)$
for all $w$ with $\vph(w)<-1$, so $\vph$ is asymptotically analytic. (Actually, it can be shown even to be tame.)
\end{example}

In Example \ref{ex:mc}, the condition of {\sl isolated} singularity is quite important, since otherwise a maximal multicircular function need not be asymptotically analytic, even if its Monge--Amp\`ere mass is supported by the origin.

\begin{example}\label{ex:nonisolated} Here we construct a multicircular function from the Cegrell class (and so, inside the definition domain of the Monge--Amp\`ere operator), solving the equation $(dd^c\Psi)^n=2\delta_0$, and whose singularity is not asymptotically analytic.

For any $N\ge 2$, the function
$$
f_N(r)=\left\{\begin{array}{ll}
(r+1)^{-2}, &
\mbox{$0<r\le N-1$}\\
-2N^{-3}(r+1-\frac32N), &
\mbox{$N-1<r\le\frac32N-1$}\\
0, &
\mbox{$r>\frac32N-1$}\end{array}\right.
$$
is convex on $(0,\infty)$ (the nonzero linear piece is just a segment of the tangent to the curve $y=(r+1)^{-2}$ at $r=N-1$).  When $N\to\infty$, the sequence $f_N$ increases to $f(r)=(r+1)^{-2}$.
Then the support functions $\psi_N(t)=\sup\{\langle a,t\rangle: a\in\Gamma_N\}$ to the convex sets
$\Gamma_N=\{a\in\Rnpt:\: a_2>f_N(a_1)\}$ decrease to the support function $\psi(t)$ for the set $\Gamma=\{a\in\Rnpt:\: a_2>f(a_1)\}$, $t\in{\Bbb R}_-^2$. The corresponding indicators $\Psi_N(z)=\psi_N(\log|z_1|,\log|z_2|)\in\MWO$ decrease then to the indicator $\Psi(z)=\psi(\log|z_1|,\log|z_2|)$. Since $\Psi_N=0$ on $\partial\D^2$ and the total Monge--Amp\`ere mass $(dd^c\Psi_N)^2(\D^2)$ equals twice the volume of the set $\Rnpt\setminus\Gamma_N$ (see, e.g., \cite{R3}) and thus is dominated by $2\,{\rm Vol}(\Rnpt\setminus\Gamma)=2$, the function $\Psi$ belongs to the Cegrell class $\F$, see \cite{C2}. By the monotone convergence theorem for $\F$, $(dd^c\Psi)^2=2\delta_0$, so $\Psi$ is maximal outside the origin.

For any $z_1\neq 0$ and $N\ge 2$, $$\Psi(z_1,0)\le \Psi_N(z_1,0)\le\Psi_N(z_1,z_1^N)=\psi_N(\log|z_1|,N\log|z_1|)\le N^{1/3}\log|z_1|.$$ Therefore, $\Psi(z_1,0)=-\infty$. At the same time, its Lelong numbers outside the origin equal zero (because $(dd^c\Psi_N)^2=0$ there). That is why any analytic weight dominating $\Psi$ must be finite outside the origin, so $\Psi$ is not asymptotically analytic.

\end{example}

{\it Remark.} We have no example of a maximal weight that is not asymptotically analytic.

\section{Green functions of Demailly's approximants}\label{sec:_GZf}

In what follows, $D$ is a bounded hyperconvex domain containing $0$. Given $\phi\in \MW$, let $G_\phi$ denote the Green function of $D$ for the singularity $\phi$. By Proposition~\ref{prop:anmax}, the Green functions $G_{\Df_k\phi}$ of Demailly's approximants $\Df_k\phi$ satisfy $G_{\Df_k\phi}\sim\Df_k\phi$.

\subsection{General case: $\phi\in\MW$}\label{sec:greendem}

By Demailly's Approximation Theorem, the functions $\Df_k\phi$ converge to $\phi$ in $L_{loc}^1$ for any $\phi\in\Wx$, and $G_{\Df_k\phi}\ge G_\phi$. We do not know if this implies convergence of $G_{\Df_k\phi}$ to $G_\phi$, even when $\phi$ is a maximal weight. This is certainly not so if $\phi$ has zero Lelong number (because then $G_{\Df_k\phi}=0$), however existence of such a weight is equivalent to the aforementioned Zero Lelong Number Problem, see \cite[Remark 6.2]{R7}.

\begin{proposition}\label{lem:1} If $\phi\in \MW$, then the sequence of the Green functions $G_{\Df_{m!}\,\phi}$ for the singularities $\Df_{m!}\,\phi$ decreases to the function $\widetilde G_\phi =\inf_k G_{\Df_k\phi}\ge G_\phi$, psh in $D$ and maximal on $D\setminus\{0\}$.
\end{proposition}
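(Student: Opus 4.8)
The plan is to establish two things: first, that the subsequence $k = m!$ makes the Green functions $G_{\Df_{m!}\phi}$ decrease in $m$; second, that the infimum of this decreasing subsequence equals the infimum $\widehat G_\phi$ over the \emph{full} sequence, and that this common limit is a nonconstant maximal weight. The monotonicity along $k=m!$ is the heart of the matter and is where the Subadditivity Theorem (\ref{eq:SAT}) enters.

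For the monotonicity, I would first record the elementary fact that $\sigma(u,\phi)\ge 1$ together with the subadditivity of multiplier ideals under scaling implies a comparison between the analytic singularities $\Df_{p}\phi$ for divisible indices. Concretely, writing $\Df_k\phi$ as in (\ref{eq:appr1}) in terms of an orthonormal basis of $\cH(k\phi)$, the multiplier ideal $\cJ(mk\,\phi)$ satisfies, by (\ref{eq:SAT}) applied $m$ times, $\cJ(mk\,\phi)\subseteq \cJ(k\phi)^m$; passing to the associated weights (the logarithmic images of these ideals, which up to $O(1)$ are $k\,\Df_k\phi$ and $mk\,\Df_{mk}\phi$ respectively) yields
$$
\Df_{mk}\phi \ \ge\ \Df_k\phi + O(1) \qquad\text{near } x,
$$
hence $\sigma(v,\Df_{mk}\phi)\ge\sigma(v,\Df_k\phi)$ for every $v$, and in particular $\Df_{mk}\phi$ has smaller (more singular) analytic singularity. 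Since the Green function is monotone with respect to the singularity — if $\vph_1\le\vph_2+O(1)$ then $G_{\vph_1}\le G_{\vph_2}$, because a competitor for the first extremal problem is a competitor for the second — this gives $G_{\Df_{mk}\phi}\le G_{\Df_k\phi}$. Taking $k=m!$ and noting $(m+1)! = (m+1)\cdot m!$ is a multiple of $m!$, we conclude $G_{\Df_{(m+1)!}\phi}\le G_{\Df_{m!}\phi}$, so the subsequence decreases. (One has to be a little careful that the integral-closure/logarithmic-image dictionary is applied correctly and that the $O(1)$ constants do not spoil the Green-function comparison; but the Green function only depends on the singularity class, so the $O(1)$ terms are harmless.)

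For the second part: the decreasing limit $\lim_m G_{\Df_{m!}\phi}$ is plurisubharmonic (decreasing limits of psh functions are psh, using also that it is bounded below by $G_\phi$, which is finite off $x$ — indeed $\Df_k\phi\le\phi+O(1)$ by the left inequality in (\ref{eq:Dap}) only up to a constant, so more precisely one uses $\Df_k\phi(z)\le\sup_{|\zeta-z|<r}\phi(\zeta)+\tfrac1k\log(C/r^n)$ to get a uniform lower bound $G_{\Df_k\phi}\ge G_\phi - o(1)\ge$ a fixed psh function near $x$, hence the limit is $\ge G_\phi$ everywhere and in particular $\not\equiv-\infty$). It is maximal off $x$ and zero on $\partial D$ as a decreasing limit of such functions, hence lies in $\MW$ and is its own Green function. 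Finally, because every index $k$ divides some $m!$, the same comparison gives $G_{\Df_{m!}\phi}\le G_{\Df_k\phi}$ once $m!\ge k$ is a multiple of $k$, so $\inf_m G_{\Df_{m!}\phi}\le\inf_k G_{\Df_k\phi}=\widehat G_\phi$; the reverse inequality is trivial since $\{m!\}\subset\{k\}$. Thus the decreasing limit equals $\widehat G_\phi$, which is therefore in $\MW$.

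The main obstacle I anticipate is the passage from the ideal inclusion $\cJ(mk\,\phi)\subseteq\cJ(k\phi)^m$ to the pointwise inequality $\Df_{mk}\phi\ge\Df_k\phi+O(1)$ — one must match the Demailly potentials (defined via $L^2$ bases over a shrinking $D$) with the logarithmic images of the multiplier ideals and control the comparison of a product of ideals $\cJ(k\phi)^m$ with $\cH(mk\,\phi)$, using that the psh singularity of $\log|\cJ(k\phi)^m|$ is $m$ times that of $\log|\cJ(k\phi)|$, which is $k\,\Df_k\phi+O(1)$; keeping the domain dependence and the additive constants under control here is the delicate bookkeeping, but conceptually it is exactly the Subadditivity Theorem doing the work.
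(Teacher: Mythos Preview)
Your overall strategy is exactly the paper's: subadditivity $\cJ(mk\,\phi)\subseteq\cJ(k\phi)^m$ forces a comparison of the Demailly weights, hence of the Green functions, and then the divisibility $(m!)\mid (m+1)!$ and $k\mid m!$ for $m\ge k$ gives both the monotonicity of the factorial subsequence and the identification of its limit with the full infimum, which dominates $G_\phi$.

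However, you have the direction of the central inequality reversed. From $\cJ(mk\,\phi)\subseteq\cJ(k\phi)^m$ one gets that the supremum defining $mk\,\Df_{mk}\phi$ runs over a \emph{smaller} class of functions, each of whose logarithms is bounded above by $m$ times the log-image of $\cJ(k\phi)$, i.e.\ by $mk\,\Df_k\phi+O(1)$. Hence $\Df_{mk}\phi\le\Df_k\phi+O(1)$, not $\ge$; this is what the paper writes and is precisely what feeds into your (correctly stated) monotonicity principle $\vph_1\le\vph_2+O(1)\Rightarrow G_{\vph_1}\le G_{\vph_2}$ to yield $G_{\Df_{mk}\phi}\le G_{\Df_k\phi}$. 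Your verbal claim that $\Df_{mk}\phi$ is ``more singular'' and your Green-function conclusion are right, but they contradict the displayed inequality you wrote (and the relative-type consequence $\sigma(v,\Df_{mk}\phi)\ge\sigma(v,\Df_k\phi)$, which should also be $\le$, though it is not actually needed). Likewise, the lower bound $\widehat G_\phi\ge G_\phi$ follows immediately from the \emph{left} inequality in (\ref{eq:Dap}), $\phi-C/k\le\Df_k\phi$, which says $\phi$ is the more singular of the two and hence $G_\phi\le G_{\Df_k\phi}$; your discussion around this point invokes the right inequality and has the bounds inverted. Once these signs are corrected, your argument coincides with the paper's.
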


\begin{proof} Let $\cJ(k\phi)$ be the multiplier ideals for the functions $k\phi$, $k\in\Z_+$.
By (\ref{eq:SAT}),
\beq\label{eq:cJmk}\cJ((mk)\phi)\subseteq\cJ(k\phi)^m,\quad \forall m,k\in\Z_+.\eeq
Since the multiplier ideals $\cJ(k\phi)$ generate the Demailly approximants $\Df_k\phi$,  we have $\Df_{mk}\phi\le \Df_k\phi+ C(k,m)$ and so,
\beq\label{eq:GDmk} G_{\Df_{mk}\phi}\le G_{\Df_k\phi},\eeq
which implies that the sequence $G_{\Df_{m!}\,\phi}$ is decreasing to some function $\widetilde G_\phi\in\PSH(D)$, maximal in $D\setminus\{0\}$. Moreover, (\ref{eq:GDmk}) yields  $G_{\Df_{m!}\,\phi}\le G_{\Df_k\phi}$ for all $k\le m$, which gives us
 $\widetilde G_\phi=\inf_k G_{\Df_k\phi}=\lim_{m\to\infty}G_{\Df_{m!}\,\phi}\ge G_\phi$ and thus is a maximal weight.
\end{proof}
\medskip

{\it Remark.} Note that (\ref{eq:SAT}) implies $\cJ((k+m)\phi)\subseteq\cJ(k\phi)\cJ(m\phi)$ for all $k,m\in\Z_+$, a stronger relation than (\ref{eq:cJmk}),  however this does not result (at least, immediately) in the relation $G_{\Df_{k+m}\phi}\le G_{\Df_k\phi}+ G_{\Df_m\phi}$, which would give us convergence of $G_{\Df_k\phi}$. Moreover, we do not know if $G_{\Df_k\phi}\to\widetilde G_\phi$ for arbitrary $\phi\in\MW$.

\medskip

The function $\widetilde G_\phi$ has a nice interpretation in terms of greenifications. Namely, let $h_\phi^A$ denote the type-greenification (\ref{eq:genind0}) with respect all analytic singularities:
\beq\label{eq:hA} h_\phi^A=\sup\{v\in\PSH^-(D):\: \sigma(v,\vph)\ge\sigma(\phi,\vph)\quad\forall\vph\in\AW\}.\eeq

\begin{proposition}\label{prop:hA} $\widetilde G_\phi=h_\phi^A$ for any $\phi\in\Wx$. As a consequence, there exists a sequence $\vph_j\in\AW$ such that the Green functions $G_{\vph_j}$ decrease to $h_\phi^A$ as $j\to\infty$.
\end{proposition}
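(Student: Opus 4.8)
The plan is to prove $\widehat G_\phi=h_\phi^A$ by two inclusions, using the two equivalent descriptions of $h_\phi^A$ recorded in Section~\ref{ssec:Greenf}: it is at once the upper envelope (\ref{eq:hA}) and, by \cite[Prop.~5.1, 5.2]{R7}, the largest plurisubharmonic minorant of the family $\{G_\vph:\ \vph\in\AW,\ \sigma(\phi,\vph)\ge1\}$.

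For the bound $h_\phi^A\le\widehat G_\phi$ I would first note that each $\Df_k\phi$ has analytic singularity and, since $\phi\in\Wx$, isolated singularity; hence either $\Df_k\phi$ is bounded near $x$ (in which case $G_{\Df_k\phi}\equiv0$) or $\Df_k\phi\in\AW$, and in the latter case the lower estimate $\phi\le\Df_k\phi+C/k$ of (\ref{eq:Dap}) forces $\sigma(\phi,\Df_k\phi)\ge1$ since both functions tend to $-\infty$ at $x$. In either case $G_{\Df_k\phi}$ belongs to the family whose largest plurisubharmonic minorant is $h_\phi^A$, so $h_\phi^A\le G_{\Df_k\phi}$ for every $k$; taking the infimum over $k$ gives $h_\phi^A\le\widehat G_\phi$.

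For the reverse bound I would use that $\widehat G_\phi\in\PSH^-(D)$ (Proposition~\ref{lem:1}) and reduce, via the envelope description (\ref{eq:hA}), to showing $\sigma(\widehat G_\phi,\vph)\ge\sigma(\phi,\vph)$ for every $\vph\in\AW$. Fixing such a $\vph$, which may be taken maximal, and dividing $\widehat G_\phi\le G_{\Df_k\phi}$ by the weight $\vph$ (which is negative near $x$), one gets $\sigma(\widehat G_\phi,\vph)\ge\sigma(G_{\Df_k\phi},\vph)=\sigma(\Df_k\phi,\vph)$ for every $k$; so everything comes down to the convergence $\sigma(\Df_k\phi,\vph)\to\sigma(\phi,\vph)$ as $k\to\infty$ for analytic weights $\vph$, after which $\sigma(\widehat G_\phi,\vph)\ge\sup_k\sigma(\Df_k\phi,\vph)=\sigma(\phi,\vph)$ finishes the argument.

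That convergence is the hard part. The inequality $\sigma(\Df_k\phi,\vph)\le\sigma(\phi,\vph)$ is immediate from $\Df_k\phi\ge\phi-C/k$; the real work is the matching lower bound $\sigma(\Df_k\phi,\vph)\ge\sigma(\phi,\vph)-C_\vph/k$. I would derive it from the tameness of analytic weights (an analytic weight is tame --- it is even exponentially H\"older; see the Examples above): with $s=\sigma(\phi,\vph)$, the relation $k\phi\le ks\,\vph+O(1)$ from (\ref{eq:rtbound}) yields the inclusion $\cJ(k\phi)\subseteq\cJ(ks\,\vph)$ of multiplier ideals, and then tameness applied to the finitely many generators of $\cJ(k\phi)$ describing $\Df_k\phi$ near $x$ (see (\ref{eq:appr1})) shows that each has relative type $\ge ks-C_\vph$ with respect to $\vph$, so that $\Df_k\phi\le(s-C_\vph/k)\vph+O(1)$ near $x$. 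A more elementary variant would go straight through (\ref{eq:Dap}) and the Lipschitz continuity of $F$, where $p\vph=\log|F|+O(1)$, by comparing the balls $\{|\zeta-z|<\delta\}$ with sublevel sets $\{\vph<r+O(1)\}$ --- the same mechanism that produces the convergence $\nu(\Df_k\phi,x)\to\nu(\phi,x)$ noted after (\ref{eq:appr1}), now carried out with $\vph$ in place of $\log|z-x|$. I expect the bookkeeping of constants in Demailly's construction to be the only delicate point. Once the identity $\widehat G_\phi=h_\phi^A$ is in hand, the final assertion is immediate: taking $\vph_j$ to be a maximal weight equivalent to $\Df_{j!}\,\phi$, the Green functions $G_{\vph_j}=G_{\Df_{j!}\,\phi}$ decrease to $\widehat G_\phi=h_\phi^A$ by Proposition~\ref{lem:1}.
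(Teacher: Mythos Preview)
Your proof is correct. The inequality $h_\phi^A\le\widehat G_\phi$ is handled exactly as in the paper (your care with the case $\Df_k\phi$ bounded is a pleasant bonus). For the reverse inequality the two arguments diverge slightly but rest on the same fact, the tameness of analytic weights.

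The paper's route for $\widehat G_\phi\le h_\phi^A$ is a one-line monotonicity trick: if $\psi\in\AW$ and $\sigma(\phi,\psi)\ge1$, i.e.\ $\phi\le\psi+O(1)$, then $\Df_k\phi\le\Df_k\psi+O(1)$ (monotonicity of Demailly's operator), hence $G_{\Df_k\phi}\le G_{\Df_k\psi}$, and since $\psi$ is analytic, (\ref{eq:Dapt}) gives $G_{\Df_k\psi}\to G_\psi$; thus $\widehat G_\phi\le G_\psi$ for every such $\psi$, and $h_\phi^A$ is the largest psh minorant of these. You instead verify $\sigma(\widehat G_\phi,\vph)\ge\sigma(\phi,\vph)$ for every $\vph\in\AW$ via the convergence $\sigma(\Df_k\phi,\vph)\to\sigma(\phi,\vph)$, which is precisely what the paper establishes later as Lemma~\ref{prop:appr} (your two sketches of it --- through the tameness inclusion $\cJ(k\phi)\subseteq\cJ(ks\,\vph)$, or through (\ref{eq:Dap}) and H\"older continuity --- are both valid). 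So your argument previews Section~5 material, whereas the paper's stays within the Green-function setup and is a bit shorter at this point; the underlying analytic input (relations (\ref{eq:Dapt})) is identical. The final sentence, with $\vph_j\sim\Df_{j!}\,\phi$, matches the paper verbatim.
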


\begin{proof}
As was mentioned in Section~\ref{ssec:Greenf}, the function $h_\phi^A$ is the best psh minorant of the family $\{G_\psi:\:\psi\in\AW,\ \sigma(\phi,\psi)\ge1\}$. Since all the functions $G_{\Df_k\phi}$ belong to that family, $h_\phi^A\le G_{\Df_k\phi}$ and thus $h_\phi^A\le \widetilde G_\phi$. On the other hand, the relation $\sigma(\phi,\psi)\ge1$ means $\phi\le\psi+O(1)$ and thus implies $\Df_k\phi\le\Df_k\psi+O(1)$, so $G_{\Df_k\psi}\ge G_{\Df_k\phi}\ge \widetilde G_\phi$ for all $k$. As $\psi\in\AW$, relation (\ref{eq:Dapt}) implies $G_{\Df_k\psi}\to G_\psi$, which gives $G_\psi\ge \widetilde G_\phi$ for all $\psi\in\AW$ with $\sigma(\phi,\psi)\ge1$, so $h_\phi^A\ge \widetilde G_\phi$.
\end{proof}
\medskip

Proposition~\ref{prop:hA} means that the singularity of $\widetilde G_\phi$ is the best upper bound on the singularity of $\phi$ when the latter can be 'tested' on all analytic weights. In other words, the valuative transforms \cite{BFaJ} of $\widetilde G_\phi$ and $G_\phi$ coincide. This gives us one more motivation on the problem if $\widetilde G_\phi$ equals $G_\phi$.

It turns out that the relation $\widetilde G_\phi =G_\phi$ is completely controlled by the behavior of the Monge--Amp\`ere masses of the functions $\Df_k\phi$.

\begin{proposition}\label{prop:convmasses}  $\widetilde G_\phi =G_\phi$ if and only if  $\,\sup_k (dd^c\Df_k\phi)^n(\{0\})= (dd^c\phi)^n(\{0\})$.
\end{proposition}

\begin{proof} This follows from Proposition~\ref{lem:1}  and Lemma~\ref{lem:conv}(1).
\end{proof}


\subsection{Green functions for asymptotically analytic weights}

For a tame weight $\vph$, inequalities (\ref{eq:Dapt}) imply
\beq\label{eq:Dapt1} G_\vph\le G_{\Df_k\vph}\le (1-{C_\vph}/{k})G_\vph\eeq
and, therefore, uniform convergence of $G_{\Df_k\vph}/G_\vph$ to $1$. We will prove that such a convergence holds true for any asymptotically analytic weight as well.

\begin{theorem}\label{theo:AAMW} Let $\psi\in\MW$. Then $\psi\in\AAW$ if and only if
\beq\label{eq:unif}\frac{G_{\Df_k\psi}}{G_\psi}\to 1 \ {\rm uniformly\ on\ }D\setminus\{0\}.\eeq
 For such a weight $\psi$, we get thus $(dd^c{\Df_k\psi})^n(\{0\})\to(dd^c\psi)^n(\{0\})$ and $\widetilde G_\psi=G_\psi$.
\end{theorem}

{\it Remark.} This gives the equivalence of the conditions (i) and (ii) of Theorem A.

\begin{proof} Relation (\ref{eq:unif}) yields (\ref{eq:ass}) with $\psi_\epsilon= \Df_k\psi$ and $k\ge k(\epsilon)$, so let us prove the reverse implication.
For $\epsilon>0$, let $\psi_\epsilon\in \AW$ be a weight satisfying (\ref{eq:ass}). Then
$$(1+\epsilon)G_{\psi_{\epsilon}}\le G_\psi\le  G_{\Df_k\psi}\le G_{\Df_k(1-\epsilon)\psi_\epsilon}$$ for all $k$.
Therefore,
 \beq\label{eq:111}\frac{G_{\Df_k(1-\epsilon)\psi_\epsilon}(z)}{(1+\epsilon)G_{\psi_\epsilon}(z)}\le \frac{G_{\Df_k\psi}(z)}{G_\psi(z)}\le 1,\quad z\in D\setminus\{0\}.\eeq
Since $(1-\epsilon)\psi_\epsilon$ has analytic singularity, (\ref{eq:Dapt1}) implies
$$  G_{\Df_k(1-\epsilon)\psi_\epsilon}\le (1-C_{\epsilon}/k)(1-\epsilon)G_{\psi_\epsilon}.
$$
Therefore, (\ref{eq:111}) for all $k\ge k(\epsilon)$ gives us
$$1-2\epsilon\le \frac{G_{\Df_k\psi}(z)}{G_\psi(z)}\le 1,\quad z\in D\setminus\{0\}
$$
(and thus, the convergence of the residual masses).
\end{proof}

\begin{corollary}\label{cor:cont} The Green function $G_\psi$ of any bounded hyperconvex domain $D$ with singularity $\psi\in\AAW$ is continuous on $D$.
\end{corollary}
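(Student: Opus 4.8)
The plan is to obtain the continuity of $G_\phi$ as a uniform limit of the Green functions $G_{\Df_k\phi}$ of Demailly's approximations of $\phi$, the uniform control being exactly what Theorem~\ref{theo:AAMW} provides.

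First I would record that each $\Df_k\phi$ is a continuous weight near $x$: by (\ref{eq:appr1}) it equals $\frac2k\log\sum_i|f_{k,i}|^2$ on a neighbourhood $D$ of $x$, so $\exp\Df_k\phi=\bigl(\sum_i|f_{k,i}|^2\bigr)^{1/k}$ is continuous there. Hence, by the continuity statement for Green--Zahariuta functions recalled in Section~\ref{ssec:Greenf}, each $G_{\Df_k\phi}$ is continuous on $D$, i.e. $\exp G_{\Df_k\phi}\in C(D)$. Next I would invoke Theorem~\ref{theo:AAMW}: since $\phi\in\AAW\subset\MW$, we have $G_{\Df_k\phi}/G_\phi\to1$ uniformly on $D\setminus\{x\}$. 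Fix an integer $m\ge2$ and choose $k=k(m)$ so that $1-1/m\le G_{\Df_k\phi}/G_\phi\le1$ on $D\setminus\{x\}$; since $G_\phi<0$ there, this reads $G_\phi\le G_{\Df_{k(m)}\phi}\le(1-1/m)G_\phi$, hence, exponentiating, with $t_m:=\exp G_{\Df_{k(m)}\phi}\in(0,1]$,
$$t_m^{1/(1-1/m)}\le\exp G_\phi\le t_m\quad\text{on }D\setminus\{x\};$$
the same double inequality holds trivially at $x$, where all three terms vanish. Since $0\le t_m-t_m^{1/(1-1/m)}=t_m\bigl(1-t_m^{1/(m-1)}\bigr)\le 1/(m-1)$ uniformly in $t_m\in[0,1]$, it follows that $\exp G_{\Df_{k(m)}\phi}\to\exp G_\phi$ uniformly on $D$. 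As a uniform limit of continuous functions, $\exp G_\phi\in C(D)$, which is the assertion.

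The argument is essentially routine once Theorem~\ref{theo:AAMW} is available; the only point requiring a little care is that the theorem delivers uniform convergence of the \emph{ratio} $G_{\Df_k\phi}/G_\phi$ on $D\setminus\{x\}$, and not uniform convergence of $G_{\Df_k\phi}$ to $G_\phi$ on $D$ — the latter in fact fails near $x$, where both functions tend to $-\infty$. Passing to exponentials, so that the singularity becomes the honest value $0$ and all quantities lie in $[0,1]$, is what converts the ratio estimate into genuine uniform convergence on all of $D$ and makes the continuity transfer go through. (Alternatively, one may argue locally: on a compact $K\subset D\setminus\{x\}$ the ratio bound together with $\sup_K|G_\phi|<\infty$ gives $\|G_{\Df_{k(m)}\phi}-G_\phi\|_{C(K)}\to0$, while at $x$ the bound $0\le\exp G_\phi\le\exp G_{\Df_1\phi}$ forces $\exp G_\phi(z)\to0$ as $z\to x$.)
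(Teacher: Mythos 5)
Your argument is correct, and it is surely the intended one: the paper states this corollary with no proof, as an immediate consequence of Theorem~\ref{theo:AAMW}. Your exponentiation trick is exactly what one needs — the theorem gives uniform convergence of the ratio $G_{\Df_k\phi}/G_\phi$ on $D\setminus\{x\}$, which does not give uniform convergence of $G_{\Df_k\phi}$ to $G_\phi$ (both blow up at $x$), but passing to $\exp$ caps everything into $[0,1]$ and the elementary estimate $t-t^{m/(m-1)}\le 1/(m-1)$ for $t\in[0,1]$ turns the ratio bound into genuine uniform convergence on all of $D$. The continuity of each $G_{\Df_k\phi}$ is correctly grounded in the fact recalled in Section~\ref{ssec:Greenf} that $G_\vph$ is continuous whenever $\exp\vph$ is. One minor caveat in your parenthetical alternative: the inequality $0\le\exp G_\phi\le\exp G_{\Df_1\phi}$ need not force $\exp G_\phi\to0$ at $x$, because $\Df_1\phi$ may fail to be singular (its Lelong number is only bounded below by $\nu(\phi,x)-n$, which can be negative); you would need to replace $\Df_1\phi$ by $\Df_k\phi$ for $k$ large enough that $\nu(\Df_k\phi,x)>0$. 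But this does not affect your main line of reasoning, which is complete and correct.
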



\subsection{Green functions for inf-analytic weights}

As is shown in Theorem \ref{theo:AAMW}, asymptotical analyticity is rather a strong property with regard to behavior of the singularities of the Demailly approximants. In order to get the largest class of weights $\phi$ such that $\widetilde G_\phi =G_\phi$, we introduce the following notion.

\begin{definition}\label{def:ia} A weight $\phi\in\MW$ has {\it inf-analytic singularity} if there exists a sequence of weights $\phi_j\in\AW\cap\MW$ decreasing to $\phi$ on a neighborhood of $0$. The class of all such weights will be denoted by $\UAW$.
\end{definition}

Note that the maximality of the weights $\phi_j$ is an essential requirement because any psh function can be realized as the limit of a decreasing sequence of functions with just analytic singularities \cite[Cor. 4.4]{Bl}.

\begin{theorem}\label{theo:uaw} Let $\phi\in\MW$, then the following conditions are equivalent:
\begin{enumerate}\item [(i)] $\phi\in\UAW$;
 \item[(ii)] $G_\phi =\inf_k G_{\Df_k\phi}$ for any bounded hyperconvex neighborhood $D$ of $0$;
 \item[(iii)] $(dd^c\phi)^n(\{0\})=\inf_k(dd^c\Df_k\phi)^n(\{0\})$;
 \item[(iv)] there exist functions $\psi_j\in\AW$, $\psi_j\ge \phi$, such that $(dd^c\psi_j)^n(\{0\})\to(dd^c\phi)^n(\{0\})$.
\end{enumerate}
\end{theorem}

{\it Remark.} This establishes the equivalence of the first three conditions in Theorem B.

\begin{proof}
Let $\phi\in\UAW$ and let $\phi_j$ be the corresponding maximal analytic weights. By the monotone convergence theorem, $(dd^c\phi_j)^n\to (dd^c\phi)^n$, and the maximality of $\phi_j$ implies the convergence of the residual masses. Since the sequence $G_{\phi_j}$ decreases to a function $u\ge G_\phi$, Lemma~\ref{lem:conv}(1) gives us then the equality $u=G_\phi$. Note now that for any $j$ we have $\inf_k G_{\Df_k\phi}\le \inf_k G_{\Df_k{\phi_j}}=G_{\phi_j}$, which shows { (i) }$\Rightarrow${ (ii)}.

To prove the implication {(ii)\ }$\Rightarrow${ (i)}, we chose $D=\{\phi<c\}$ for some $c$ sufficiently close to $-\infty$, so $\phi=G_\phi +c$ on $D$, and apply Proposition~\ref{lem:1}.

The relation {(ii) $\Leftrightarrow$ (iii)} follows from Proposition~\ref{prop:convmasses}.

Condition {(i)} obviously implies {(iv)} with $\psi_j=\phi_j$.

Finally, assume {(iv)}.
Then for any $m,j\in\Z_+$, we have
$ G_{\Df_{m!}\,\phi}\le G_{\Df_{m!}\,\psi_j}$, so
 $$ (dd^c\Df_{m!}\,\phi)^n(\{0\})\ge (dd^c\Df_{m!}\,\psi_j)^n(\{0\}).$$
Given $\epsilon>0$, choose $j$ such that $(dd^c\psi_j)^n(\{0\})>(dd^c\phi)^n(\{0\})-\epsilon/2$. Since $\psi_j\in\AW$, we can find then, by Theorem~\ref{theo:AAMW}, a number $m_0$ such that
$$(dd^c{\Df_{m!}\,\psi_j})^n(\{0\})\ge (dd^c\psi_j)^n(\{0\})-\epsilon/2$$ for all $m\ge m_0$. Therefore, $$(dd^c{\Df_{m!}\,\phi})^n(\{0\})\ge (dd^c\phi)^n(\{0\})-\epsilon, \quad m\ge m_0,$$ and Propositions~\ref{lem:1} and \ref{prop:convmasses} prove {(ii)}.
\end{proof}

\medskip

\begin{corollary} $\AAW\cap\MW\subset\UAW$. \end{corollary}

\begin{proof} This follows from Theorems \ref{theo:AAMW} and \ref{theo:uaw}. \end{proof}

\begin{corollary} If $\phi\in\UAW$, then $(dd^c{\Df_{k}\,\phi})^n(\{0\})\to (dd^c\phi)^n(\{0\})$ and $G_{\Df_k\phi}\to G_\phi$ in $L^n(D)$ as $k\to\infty$.
\end{corollary}

{\it Remark.} This is the last assertion of Theorem B.

\begin{proof} Let $\phi_j\ge\phi$ be maximal analytic weights from the definition of inf-analyticity.
We have then, for any $j$,
 $$(dd^c\phi_j)^n(\{0\})=\lim_{k\to\infty}(dd^c{\Df_{k}\,\phi_j})^n(\{0\})\le\liminf_{k\to\infty}(dd^c{\Df_{k}\,\phi})^n(\{0\}),$$ so the relations $(dd^c\phi_j)^n(\{0\})\to (dd^c\phi)^n(\{0\})$ and $(dd^c{\Df_{k}\,\phi})^n(\{0\})\le(dd^c\phi)^n(\{0\})$ prove the first convergence.

Since $G_{\Df_k\phi}\ge G_\phi$ and these functions are maximal on $D\setminus\{0\}$ and equal to zero on $\partial D$, the second statement follows from Lemma~\ref{lem:ACCP}.
\end{proof}

\medskip
In view of Proposition~\ref{prop:hA}, we have

\begin{corollary}\label{cor:hA1} A weight $\phi\in\MW$ belongs to $\UAW$ if and only if $G_\phi$ coincides with the function $h_\phi^A$ defined by (\ref{eq:hA}).
\end{corollary}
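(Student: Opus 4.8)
The statement to prove is Corollary~\ref{cor:hA1}: a weight $\phi\in\MW$ belongs to $\UAW$ if and only if $G_\phi=h_\phi^A$.

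The plan is to combine Theorem~\ref{theo:uaw} with Proposition~\ref{prop:hA}. Theorem~\ref{theo:uaw} states that $\phi\in\UAW$ if and only if $G_\phi=\inf_k G_{\Df_k\phi}$, while Proposition~\ref{prop:hA} identifies $\widehat G_\phi=\inf_k G_{\Df_k\phi}$ with the type-greenification $h_\phi^A$. Hence the two characterizing conditions, $G_\phi=\inf_k G_{\Df_k\phi}$ and $G_\phi=h_\phi^A$, are literally the same condition rewritten, and the corollary follows at once.

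So the proof is essentially a one-line chaining of the two cited results. The only point worth spelling out is that $\widehat G_\phi$ in Proposition~\ref{lem:1} and Proposition~\ref{prop:hA} is defined as $\inf_k G_{\Df_k\phi}$, which is precisely the object appearing in Theorem~\ref{theo:uaw}; once this is noted, the equivalence $\phi\in\UAW \Leftrightarrow G_\phi=\widehat G_\phi \Leftrightarrow G_\phi=h_\phi^A$ is immediate. There is no real obstacle here — the work was already done in establishing Theorem~\ref{theo:uaw} (via the Subadditivity Theorem and Lemma~\ref{lem:conv}) and Proposition~\ref{prop:hA} (via the interpretation of $h_\phi^A$ as the best plurisubharmonic minorant of $\{G_\psi:\psi\in\AW,\ \sigma(\phi,\psi)\ge1\}$ together with relation (\ref{eq:Dapt})).

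\begin{proof}
By Theorem~\ref{theo:uaw}, $\phi\in\UAW$ if and only if $G_\phi=\inf_k G_{\Df_k\phi}=\widehat G_\phi$. By Proposition~\ref{prop:hA}, $\widehat G_\phi=h_\phi^A$. Combining the two equivalences gives the assertion.
\end{proof}
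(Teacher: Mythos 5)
Your proposal is correct and matches the paper's reasoning exactly: the paper itself introduces the corollary with the remark ``In view of Proposition~\ref{prop:hA}, this can be restated as follows,'' so the intended proof is precisely the one-line chaining of Theorem~\ref{theo:uaw} with the identification $\widehat G_\phi=h_\phi^A$ from Proposition~\ref{prop:hA}.
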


{\it Remark.} In terms of formal psh functions on the space $\mathcal V$ of valuations on the ring $\cO_0$ introduced in \cite{BFaJ}, the corollary can be stated as follows: a maximal  weight $\phi$ is inf-analytic if and only if $G_\phi=\sup\,\{v\in\PSH^-(D):\: \hat v\le\hat\phi\}$, where $\hat v$, $\hat\psi$ are the valuative transforms of $v$ and $\psi$.

Furthermore, a weight $\phi\in\UAW$ is asymptotically analytic if and only if $\hat\phi$ is continuous on $\mathcal V$, which follows from Dini's lemma applied to the valuative transforms $\hat\phi_j$ of the analytic weights $\phi_j$ from Definition~\ref{def:ia}.


\section{Asymptotic multiplier ideals}\label{sec:multideals}

Another properties of analytically approximable weights come from consideration of asymptotic multiplier ideals, the notion introduced in \cite{ELS1}. Recall that a family $\frad$ of ideals ${\mathfrak a}_k\subset\cO_0$ is called {\it graded} if $\fra_m\cdot\fra_k\subseteq\fra_{m+k}$ for all positive integers $m$ and $k$; we assume $\fra_k\neq\{0\}$ for $k>0$. If all $\fra_k$ are primary (i.e, $V(\fra_k)=\{0\}$), then there exists the limit
 \beq\label{eq:lima}e(\frad)=\lim_{k\to\infty}k^{-n}e(\fra_k),\eeq
 called the {\it multiplicity} of $\frad$ \cite[Cor.~1.5]{Mustata}; here $e(\fra_k)$ is the Samuel multiplicity of $\fra_k$.

 The multiplier ideal of an ideal $\fra$ is understood here as the multiplier ideal $\cJ(\log|F|)$, where $F$ is a mapping whose components generate the ideal $\fra$, see Section~\ref{ssecDem}.
 As follows from the Subadditivity Theorem, the family of multiplier ideals $\{\cJ(\fra_p^{k/p})\}_{p\in{\Bbb N}}$ has a unique maximal element, denoted here by $\frj_k$, and it coincides with $\cJ(\fra_p^{k/p})$ for all sufficiently great values of $p$, see \cite{ELS1}. The ideals $\frj_\bullet=\{\frj_k\}$ are called {\it asymptotic multiplier ideals} of $\fra_\bullet$. One has always the inclusions
\beq\label{eq:asmult}\frj_{km}\subseteq \frj_k^m\eeq
and \beq\label{eq:asmult1}\fra_k\subseteq \frj_k,\eeq see \cite[Prop. 1.7]{ELS1}; furthermore, as shown in \cite{Mustata}, there exists the limit
\beq\label{eq:limj} e(\frj_\bullet)=\lim_{k\to\infty} k^{-n}e(\frj_k).\eeq
In some cases, $\frj_\bullet$ are not much bigger than $\fra_\bullet$, in the sense that $e(\frad)=e(\frj_\bullet)$: for instance, when $\fra_k$ are defined by Abhyankar valuations \cite{ELS2} or when they are monomial \cite{Mustata}.

Let us apply the machinery of Green functions to such families of ideals.
Given a bounded hyperconvex neighborhood $D$ of $0$, let $G_{\fra_k}$ denote the Green function of $D$ with singularity along $\fra_k$ \cite{RSig2} (i.e., with the singularity $\vph_k=\log|F_k|$, where $F_k$ are holomorphic mappings whose components generate $\fra_k$), and let $h_k=k^{-1}G_{\fra_k}$. Similarly, we denote $H_k=k^{-1}G_{\frj_k}$; as follows from (\ref{eq:asmult1}),
\beq\label{eq:dom}H_k\ge h_k.\eeq

Since the family $\fra_\bullet$ is graded, we have $\fra_k^m\subseteq\fra_{km}$ and thus  $h_{k}\le h_{km}$, hence we can argue as in the proof of Proposition~\ref{lem:1}. In doing so, we derive that the sequence $h_{m!}$ is increasing to the function $h_{\fra_\bullet}:=\sup_k h_k$ as $m\to\infty$. Denote its upper semicontinuous regularization $(h_{\fra_\bullet})^*$ by $G_{\fra_\bullet}$.

In the same manner, relation (\ref{eq:asmult}) implies $H_{k}\ge H_{km}$, so the sequence $H_{m!}$ is decreasing to the function $G_{\frj_\bullet}:=\inf_k H_k\in\MW$ as $m\to\infty$. By (\ref{eq:dom}),
\beq\label{eq:dom1}G_{\fra_1}\le G_{\fra_\bullet}\le G_{\frj_\bullet}\le G_{\frj_1};\eeq
in particular, $G_{\fra_\bullet},G_{\frj_\bullet}\in\MW$.

\begin{proposition}\label{prop:hkHk}  Let $h_k$, $H_k$, $G_{\fra_\bullet}$, and $G_{\frj_\bullet}$ be defined as above for a graded family $\fra_\bullet$ of primary ideals in $\cO_0$. Then
\begin{enumerate}
\item[(i)] $h_k\to G_{\fra_\bullet}$ and $H_k\to G_{\frj_\bullet}$ in $L^n(D)$ as $k\to\infty$;
\item[(ii)] $G_{\fra_\bullet}=G_{\frj_\bullet}$ if and only if $e({\fra_\bullet})=e({\frj_\bullet})$.
\end{enumerate}
\end{proposition}

\begin{proof}
 Note first that, since the Samuel multiplicity $e(\cI)$ of a primary ideal $\cI$ generated by $f_1,\ldots,f_m$ equals the residual Monge--Amp\`ere masse of the function $\log|f|$  \cite[Lemma 2.1]{D8}, we have
$e(\fra_k)=k^n(dd^c h_k)^n(\{0\})$ and $e(\frj_k)=k^n(dd^c H_k)^n(\{0\})$.

Since $h_{m!}$ increase a.e. to $h_{\fra_\bullet}$ and $H_{m!}$ decrease to $G_{\frj_\bullet}$ as $m\to\infty$, we have
$$(dd^c {h_{m!}})^n(\{0\})\to (dd^c {G_{\fra_\bullet}})^n(\{0\}),\quad (dd^c {H_{m!}})^n(\{0\})\to (dd^c {G_{\frj_\bullet}})^n(\{0\}),$$ so (\ref{eq:lima}) and (\ref{eq:limj}) give us
$$(dd^c h_k)^n(\{0\})\to (dd^c {G_{\fra_\bullet}})^n(\{0\}),\quad (dd^c H_k)^n(\{0\})\to (dd^c {G_{\frj_\bullet}})^n(\{0\});$$
in other words,
\beq\label{eq:multas} (dd^c {G_{\fra_\bullet}})^n(\{0\})=e(\fra_\bullet),\quad
(dd^c {G_{\frj_\bullet}})^n(\{0\})=e(\frj_\bullet).
\eeq
In addition, $h_k\le G_{\fra_\bullet}$ and $H_k\ge G_{\frj_\bullet}$ and all these functions are maximal on $D\setminus\{0\}$, equal to zero on $\partial D$. Therefore, {(i)} follows, as in the proof of Theorem~\ref{theo:uaw}, from Lemma~\ref{lem:ACCP}.

Statement {(ii)} follows now from (\ref{eq:dom1}) and (\ref{eq:multas}) by Lemma~\ref{lem:zi}.
\end{proof}

\medskip

{\it Remark.} A more direct way of showing $h_k\to G_{\fra_\bullet}$ is by observing that the relation $\fra_m\cdot\fra_k\subseteq\fra_{m+k}$ implies $mh_m+kh_k\le (m+k)h_{m+k}$, so that the sequence $h_k$ increases. We did not use that argument for the sake of uniform treating for both $h_k$ and $H_k$.

\medskip

We are going now to find conditions providing the equality $G_{\fra_\bullet}=G_{\frj_\bullet}$.
Since $\frj_k=\cJ(\fra_p^{k/p})=\cJ(kh_p)$ for $p>p(k)$, we have $H_k=\Df_kh_p+O(1)$ for $p>p(k)$. Therefore,
$H_k\le\Df_kG_{\fra_\bullet}+O(1)$.
In view of Theorem~\ref{theo:uaw}, this proves

\begin{proposition}\label{prop:hk} $G_{\fra_\bullet}\le H_k\le G_{\Df_kG_{\fra_\bullet}}$. Therefore,
 $G_{\fra_\bullet}=G_{\frj_\bullet}$ if $G_{\fra_\bullet}\in\UAW$.
\end{proposition}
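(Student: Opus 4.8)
The plan is to read off both inequalities from facts already assembled just above the statement and then feed them into Theorem~\ref{theo:uaw}. The left inequality $G_{\frad}\le H_k$ requires nothing new: by (\ref{eq:dom1}) one has $G_{\frad}\le G_{\frj_\bullet}$, and $G_{\frj_\bullet}=\inf_kH_k\le H_k$ by the very definition of $G_{\frj_\bullet}$.

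For the right inequality $H_k\le G_{\Df_kG_{\frad}}$ I would argue at the level of relative types. We already have $\frj_k=\cJ(kh_p)$ for all $p>p(k)$, so that $H_k=\Df_kh_p+O(1)$ near $x$; moreover $h_p=p^{-1}G_{\fra_p}\le G_{\frad}$ by the construction of $G_{\frad}$ as the regularized supremum of the $h_k$, and Demailly's operator is monotone ($u\le v\Rightarrow\Df_ku\le\Df_kv$, directly from its $L^2$ definition in (\ref{eq:appr1})), hence $H_k\le\Df_kh_p+O(1)\le\Df_kG_{\frad}+O(1)$ near $x$. Since $\Df_kG_{\frad}$ has analytic (hence essentially maximal) singularity, this estimate says precisely that $\sigma(H_k,\Df_kG_{\frad})\ge1$; as $H_k\in\PSH^-(D)$, the function $H_k$ is therefore a competitor in the envelope defining $G_{\Df_kG_{\frad}}$, and we conclude $H_k\le G_{\Df_kG_{\frad}}$ on all of $D$.

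Finally, assume $G_{\frad}\in\UAW$. Since $G_{\frad}\in\MW$, vanishes on $\partial D$ and is maximal off $x$, it is the Green function of its own singularity, so Theorem~\ref{theo:uaw} applied with $\phi=G_{\frad}$ gives $G_{\frad}=\inf_kG_{\Df_kG_{\frad}}$. Passing to the infimum over $k$ in the chain $G_{\frad}\le H_k\le G_{\Df_kG_{\frad}}$ just established yields
$$G_{\frad}\le\inf_kH_k\le\inf_kG_{\Df_kG_{\frad}}=G_{\frad},$$
so $\inf_kH_k=G_{\frad}$; since $\inf_kH_k=G_{\frj_\bullet}$ by definition, $G_{\frad}=G_{\frj_\bullet}$ as claimed (and then $e(\frad)=e(\frj_\bullet)$ follows from Proposition~\ref{prop:hkHk}(ii)).

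The argument is essentially routine once the preparatory facts are granted; the only step calling for a little care is turning the germ-level bound ``$H_k\le\Df_kG_{\frad}+O(1)$ near $x$'' into the global inequality $H_k\le G_{\Df_kG_{\frad}}$, which rests on the envelope/comparison characterization of the Green function, together with the point — implicit already in Section~\ref{sec:greendem} — that $G_{\frad}$ genuinely coincides with the Green function of its own singularity, so that Theorem~\ref{theo:uaw} applies verbatim.
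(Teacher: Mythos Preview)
Your proof is correct and follows essentially the same line as the paper's. The paper establishes the two inequalities in the paragraph immediately preceding the proposition (using $\frj_k=\cJ(kh_p)$, hence $H_k=\Df_kh_p+O(1)\le\Df_kG_{\fra_\bullet}+O(1)$) and then invokes Theorem~\ref{theo:uaw}; you have simply spelled out the routine details of passing from the germ-level bound to the global Green-function inequality and of identifying $G_{\fra_\bullet}$ with its own Green function.
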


{\it Remark.} If the ideals $\fra_k$ are monomial, then the limit function $G_{\fra_\bullet}$ is multicircular and thus, as shown in Example \ref{ex:mc}, asymptotically analytic. Applying Proposition~\ref{prop:hk}, we recover the aforementioned result $e({\fra_\bullet})=e({\frj_\bullet})$ for monomial sequences  \cite{Mustata}.

\medskip

From now on, we specify
\beq\label{eq:grtypes}\fra_k=\fra_k(\phi)=\{f\in\cO_0:\: \sigma(\log|f|,\phi)\ge k\},\quad \phi\in\MW.\eeq
Since $\sigma(u+v,\phi)\ge\sigma(u,\phi)+\sigma(v,\phi)$, it is a graded family of ideals. To guarantee that $\fra_k$ are primary and different from $\{0\}$, we assume  $\phi$ to have finite {\it {\L}ojasiewicz exponent}:
\beq\label{eq:Loj}\limsup_{z\to 0} \frac{\phi(z)}{\log|z|}<\infty.\eeq
We will see that, for such a choice of $\fra_k$, the relation $G_{\fra_\bullet}=G_{\frj_\bullet}=G_\phi$ is true for asymptotically analytic and even more general singularities $\phi$.

\medskip

Since $\sigma(h_k,\phi)\ge 1$, we have $h_k\le G_\phi$ and thus
\beq\label{eq:hfgf} G_{\fra_\bullet}\le G_\phi.\eeq

\begin{definition} A weight $\phi\in\MW$ has {\it sup-analytic singularity} if it coincides a.e.\! on a neighborhood of $0$ with the limit of an increasing sequence of maximal analytic weights. The collection of sup-analytic weights is denoted by $\OAW$.
\end{definition}

\begin{proposition} A maximal weight $\phi$ belongs to $\OAW$ if and only if $\phi\ge\psi_j$ for some weights $\psi_j\in\AW$ with $(dd^c\psi_j)^n(\{0\})\to(dd^c\phi)^n(\{0\})$ as $j\to\infty$.
\end{proposition}

\begin{proof} Let $\psi_j$ satisfy the conditions of the proposition, and let $\phi_j$ be the Green functions for the singularities $\max_{k\le j}\,\psi_j\in\AW$ on the domain $\{\phi<c\}$ for some $c\in\R$. They increase a.e. to a function $u\le \phi-c$. Since $(dd^c\phi)^n(\{0\})\le(dd^c\phi_j)^n(\{0\})\le (dd^c\psi_j)^n(\{0\})$, Lemma~\ref{lem:conv}(ii) implies the convergence of the functions $\phi_j+c$ to $\phi$.

The reverse implication is trivial.
\end{proof}

\medskip

Note that any maximal asymptotically analytic weight is sup-analytic. Note also that any weight $\phi\in\OAW$ has finite {\L}ojasiewicz exponent (\ref{eq:Loj}).

\begin{proposition}\label{prop:OAW} $G_{\fra_\bullet}=G_\phi$ if and only if $\phi\in\OAW$.
\end{proposition}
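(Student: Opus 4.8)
The plan is to prove the two implications separately, using the monotone convergence machinery already set up together with the abstract properties of type-greenifications. For the direction $\phi\in\OAW\Rightarrow G_\phi=G_{\fra_\bullet}$: by \eqref{eq:hfgf} we already have $G_{\fra_\bullet}\le G_\phi$, so it suffices to produce the reverse inequality, and by Lemma~\ref{lem:zi} (applied to $G_{\fra_\bullet}\le G_\phi$, both maximal on $D\setminus\{x\}$, vanishing on $\partial D$) it is enough to show $\tau_{G_{\fra_\bullet}}\ge\tau_\phi$. Pick analytic weights $\phi_j\in\AW$ with $\phi\ge\phi_j+O(1)$ and $\tau_{\phi_j}\to\tau_\phi$. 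First I would check that $\sigma(\phi,\vph)\ge\sigma(\phi_j,\vph)$ forces every analytic function $f$ with $\sigma(\log|f|,\phi_j)\ge k$ to satisfy $\sigma(\log|f|,\phi)\ge k$ — wait, that inequality goes the wrong way; instead the correct route is: the ideals $\fra_k(\phi)$ contain $\fra_k(\phi')$ whenever $\phi\le\phi'+O(1)$, so $\fra_k(\phi)\supseteq\fra_k(\phi_j)$, hence $h_k=k^{-1}G_{\fra_k(\phi)}\ge k^{-1}G_{\fra_k(\phi_j)}$. Now $\phi_j$ is analytic, so — as recorded in Section~3 — $\phi_j$ is equivalent to a maximal weight and, being analytic (hence tame by the second Example), $G_{\fra_k(\phi_j)}$ rescales to $G_{\phi_j}$: more precisely $\sigma(\log|f|,\phi_j)$ relates to divisorial data via \cite{LT}, and one shows $k^{-1}G_{\fra_k(\phi_j)}\to G_{\phi_j}$ (indeed for an analytic weight $\fra_k(\phi_j)$ is essentially the $k$-th integral-closure power of the defining ideal, whose Samuel multiplicity is $k^n\tau_{\phi_j}$). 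Passing to the limit, $G_{\fra_\bullet}\ge G_{\phi_j}$, hence $\tau_{G_{\fra_\bullet}}\le\tau_{\phi_j}$... again the inequality direction is delicate, so one instead uses $G_{\fra_\bullet}\ge G_{\phi_j}\ge\phi_j+O(1)$ together with $G_{\fra_\bullet}\le G_\phi$ and $\tau_{\phi_j}\to\tau_\phi=\tau_{G_\phi}$, and squeezes: $\tau_{\phi_j}\le\tau_{G_{\fra_\bullet}}\le\tau_{G_\phi}=\tau_\phi$, so $\tau_{G_{\fra_\bullet}}=\tau_\phi$ and Lemma~\ref{lem:zi} finishes.

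For the converse $G_\phi=G_{\fra_\bullet}\Rightarrow\phi\in\OAW$: I would exhibit the required sup-analytic approximants directly from the family $\fra_\bullet$. The natural candidates are $\phi_j:=G_{\fra_{m_j}}$ (or rather the analytic weights $\vph_{m_j}=\log|F_{m_j}|$ generating $\fra_{m_j}$), suitably rescaled by $m_j^{-1}$; these lie in $\AW$ by construction. Since $\sigma(h_k,\phi)\ge1$ means $h_k\le G_\phi+O(1)$, equivalently (after rescaling back) $m_j^{-1}\vph_{m_j}\le G_\phi+O(1)$, i.e. $\phi\ge m_j^{-1}\vph_{m_j}+O(1)$ up to passing to $g_\phi$; that gives the domination $\phi\ge\phi_j+O(1)$ required in the definition of $\OAW$. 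It remains to check the mass convergence $\tau_{\phi_j}\to\tau_\phi$: rescaling, $\tau_{m_j^{-1}\vph_{m_j}}=m_j^{-n}e(\fra_{m_j})=\tau_{h_{m_j}}$, and by Proposition~\ref{prop:hkHk}(i) (or directly from the increasing convergence $h_{m!}\nearrow h_{\fra_\bullet}$ and Lemma~\ref{lem:conv}(ii)) we have $\tau_{h_k}\to\tau_{G_{\fra_\bullet}}=\tau_{G_\phi}=\tau_\phi$ precisely because of the hypothesis $G_\phi=G_{\fra_\bullet}$. Hence $\phi\in\OAW$.

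The main obstacle I expect is the first direction, specifically the claim that for an analytic weight $\phi_j$ one has $k^{-1}G_{\fra_k(\phi_j)}\to G_{\phi_j}$ with convergent residual masses; this is the place where the identification of $\sigma(\log|f|,\log|F|)$ with Samuel's asymptotic function $\bar\nu_I(f)$ from \cite{LT} really gets used, to see that $\fra_k(\phi_j)$ is (up to integral closure, which does not affect $G$ or $\tau$) the $k$-th power of the defining ideal $I$ of $\phi_j$, so that $e(\fra_k(\phi_j))=k^n\,e(I)=k^n\tau_{\phi_j}$ and the rescaled Green functions converge by the same Lemma~\ref{lem:ACCP} argument as in Proposition~\ref{prop:hkHk}(i). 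Once that analytic base case is in hand, both directions reduce to the monotone-convergence-plus-Lemma~\ref{lem:zi} template already established, and the bookkeeping of the $O(1)$ terms (replacing $\phi$ by $g_\phi$ where needed, which changes neither relative types nor residual mass by \eqref{eq:grfmass}) is routine.
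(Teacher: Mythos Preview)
Your argument is essentially correct and follows the same route as the paper: both reduce to the analytic base case $G_{\fra_\bullet(\phi_j)}=G_{\phi_j}$ (which, as you note, is where the identification of $\sigma(\log|f|,\log|F|)$ with Samuel's $\bar\nu_I(f)$ enters), then compare with $G_{\fra_\bullet(\phi)}$ via the inclusion $\fra_k(\phi_j)\subseteq\fra_k(\phi)$. The converse direction you spell out is exactly what the paper dismisses as ``evident.''

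Two small points. First, your mass chain is written with the inequalities reversed: from $G_{\fra_\bullet}\ge G_{\phi_j}$ and $G_{\fra_\bullet}\le G_\phi$ one gets $\tau_{\phi_j}\ge\tau_{G_{\fra_\bullet}}\ge\tau_{G_\phi}=\tau_\phi$ (the more singular function has the larger residual mass), not the other way; the squeeze of course still works since $\tau_{\phi_j}\to\tau_\phi$. Second, the paper organizes the first direction slightly differently: it replaces $\phi_j$ by the monotone $\psi_j=\max_{i\le j}\phi_i$, so that $G_{\psi_j}$ increase and Lemma~\ref{lem:conv}(ii) gives $(\sup_jG_{\psi_j})^*=G_\phi$ directly; then $G_{\fra_\bullet}\ge G_{\psi_j}$ yields the pointwise bound $G_{\fra_\bullet}\ge G_\phi$ without passing through a mass comparison. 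Your mass-squeeze-plus-Lemma~\ref{lem:zi} variant is equally valid and avoids the monotonization step.
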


{\it Remark.} This is statement (i) of Theorem C.

\begin{proof}
Let $\phi\in\OAW$.
Given a sequence $\phi_i\in\MW\cap\AW$ increasing a.e.\! to  $\phi$, the weights $\psi_j=\max_{i\le j}\phi_i$ have analytic singularities and satisfy $\phi\ge\psi_j+O(1)$ and $$(dd^c\phi)^n(\{0\})\le(dd^c\psi_j)^n(\{0\})\le(dd^c\phi_j)^n(\{0\}),$$ so $(dd^c\psi_j)^n(\{0\})\to(dd^c\phi)^n(\{0\})$. In addition, the corresponding Green functions $G_{\psi_j}$ increase to some function $\widetilde \psi$. By Lemma~\ref{lem:conv}(ii), $\widetilde \psi^*=G_\phi$.

Denote $\fra_k^{(j)}=\{f\in\cO_x:\: \sigma(\log|f|,\psi_j)\ge k\}$, then $\fra_k^{(j)}\subseteq\fra_k$ and hence $h_k^{(j)}\le h_k$ and $G_{\fra_\bullet}^{(j)}\le G_{\fra_\bullet}$.  Since $\phi_j$ have analytic singularity, $G_{\fra_\bullet}^{(j)}=G_{\phi_j}$ and so, $G_{\fra_\bullet}\ge G_{\phi_j}$ for all $j$ and thus $G_{\fra_\bullet}\ge G_{\phi}$. In view of (\ref{eq:hfgf}), this proves $G_\phi= G_{\fra_\bullet}$.

The other direction is evident by the construction of $G_{\fra_\bullet}$.
\end{proof}

\begin{theorem}\label{theo:hfhf} $G_{\fra_\bullet}=G_{\frj_\bullet}=G_\phi$ if and only if $\phi\in\OAW\cap\UAW$.
\end{theorem}

{\it Remark.} This is statement (ii) of Theorem C.

\begin{proof} If $\phi\in\OAW\cap\UAW$, then, by Proposition~\ref{prop:OAW}, $G_\phi=G_{\fra_\bullet}$. Now Proposition~\ref{prop:hk} becomes applicable and gives $G_{\fra_\bullet}=G_{\frj_\bullet}$.
The converse implication is obvious because, by construction, $G_{\fra_\bullet}\in\OAW$ and $G_{\frj_\bullet}\in\UAW$.
\end{proof}

\medskip
{\it Remarks.} 1. Since every Abhyankar valuation is generated by a special tame weight \cite[Thm.~5.13]{BFaJ}, Theorem~\ref{theo:asmult} extends the result $e(\frad)=e(\frj_\bullet)$  \cite{ELS2} from Abhyankar valuations to a much larger class of weights (note however that, for Abhyankar valuations, a stronger result is proved there).

\medskip
2. By using Theorem~\ref{theo:hfhf}, it is easy to see that $G_{\frj_\bullet}\le \widetilde G_\phi=\inf_k G_{\Df_k\phi}$ for any weight $\phi\in\MW$. We do not know if the sole condition $\phi\in\UAW$ implies $G_{\frj_\bullet}=G_\phi$.

\medskip
Since $\AAW\cap\MW\subset\UAW\cap\OAW$, the 'if' statement of Theorem~\ref{theo:hfhf} holds true for all asymptotically analytic singularities. It is not surprising that for such weights one can claim even more.

We start with a characterization of $\AAW$ in terms of the functions $h_k=k^{-1}G_{\fra_k}$.

\begin{proposition}\label{prop:Gk} Let $\phi\in\MW$, then $\phi\in\AAW$ if and only if
${G_\phi}/{h_k}\to 1$, uniformly on $D\setminus\{0\}$, as $k\to\infty$.
\end{proposition}

\begin{proof}
If $\phi$ has asymptotically analytic singularity, then for any $\epsilon\in(0,1)$ one can find a holomorphic mapping $F_\epsilon$ and a positive integer $k$ such that
\beq\label{eq:Gk1}\frac1k\log|F_\epsilon|\le\phi+O(1)\le \frac{1-\epsilon}k\log|F_\epsilon|,\eeq
so all the components of $F_\epsilon$ belong to $\fra_k$ and thus $h_k\ge (1-\epsilon)^{-1} G_\phi$, which gives the convergence. Conversely, the inequality $ G_\phi/h_k>(1-\epsilon)$ on $D\setminus\{0\}$ implies (\ref{eq:Gk1}).
\end{proof}

\begin{theorem}\label{theo:asmult} If a graded family $\fra_\bullet=\{\fra_k\}$ is defined by (\ref{eq:grtypes}) for $\phi\in\AAW$ and $\frj_k$ are the corresponding asymptotic multiplier ideals, then $G_{\fra_k}/G_{\frj_k}\to 1$ uniformly on $D\setminus\{0\}$ and $G_{\fra_\bullet}=G_{\frj_\bullet}=G_\phi$.
\end{theorem}

{\it Remark.} This is statement (iii) of Theorem C.

\begin{proof} The theorem follows from Propositions~\ref{prop:Gk}, \ref{prop:hk} and Theorem~\ref{theo:AAMW}.
\end{proof}

\medskip
Observe that our approach gives a precise meaning to the fact that the families $\fra_\bullet$ and $\frj_\bullet$ are 'close'. Since all the ideals $\fra_k$ and $\frj_k$ are integrally closed, they are in a one-to-one correspondence with their Green functions $G_{\fra_k}$ and $G_{\frj_k}$ in the sense that, for example, $f\in\fra_k \Leftrightarrow \log|f|\le G_{\fra_k}+O(1)$. These functions have the scaled limits $$G_{\fra_\bullet}=\lim_{k\to\infty} k^{-1} G_{\fra_k},\quad  G_{\frj_\bullet}=\lim_{k\to\infty}k^{-1} G_{\frj_k},$$
so, with an obvious denotation,
$$  \lim_{k\to\infty} k^{-1} \log|\fra_k| =\{u\in\PSH_0:\: \sigma(u,G_{\fra_\bullet})\ge 1\},
$$
$$ \lim_{k\to\infty}k^{-1}\log|\frj_k|=\{u\in\PSH_0:\: \sigma(u,G_{\frj_\bullet})\ge 1\},
$$
and for 'good' weights $\phi$, the limits coincide and equal $\{u\in\PSH_0:\: \sigma(u,\phi)\ge 1\}$.


\section{Relative types}

Let us first mention some simple properties of types relative to analytically approximable weights.

\begin{proposition}\label{cor:convergeps} Let $\psi\in \AAW$ and let $\psi_\epsilon\in \AW$ be weights from (\ref{eq:ass}), then $\sigma(u,\psi_\epsilon)\to\sigma(u,\psi)$ as $\epsilon\to 0$.
\end{proposition}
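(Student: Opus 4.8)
The plan is to sandwich $\sigma(u,\psi)$ between limits of $\sigma(u,\psi_\epsilon)$ using the two inequalities in (\ref{eq:ass}) together with monotonicity and the triangle-type inequality (\ref{eq:triangle}) for relative types. First I would record what (\ref{eq:ass}) says in the language of relative types: as already noted in the proof of the preceding proposition, $(1+\epsilon)\psi_\epsilon+O(1)\le\psi$ is equivalent to $\sigma(\psi,\psi_\epsilon)\ge 1+\epsilon$... actually, to the cleaner pair $\sigma(\psi_\epsilon,\psi)\ge(1+\epsilon)^{-1}$ and $\sigma(\psi,\psi_\epsilon)\ge 1-\epsilon$. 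Combining these with (\ref{eq:triangle}), $\sigma(u,\psi)\ge\sigma(u,\psi_\epsilon)\sigma(\psi_\epsilon,\psi)\ge(1+\epsilon)^{-1}\sigma(u,\psi_\epsilon)$ and, symmetrically, $\sigma(u,\psi_\epsilon)\ge\sigma(u,\psi)\sigma(\psi,\psi_\epsilon)\ge(1-\epsilon)\sigma(u,\psi)$. These two bounds give
\[
(1-\epsilon)\,\sigma(u,\psi)\le\sigma(u,\psi_\epsilon)\le(1+\epsilon)\,\sigma(u,\psi),
\]
and letting $\epsilon\to 0$ yields $\sigma(u,\psi_\epsilon)\to\sigma(u,\psi)$.

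The only point needing a word of care is the case $\sigma(u,\psi)=\infty$ or, at the other extreme, a degenerate bound: if $\sigma(u,\psi)=0$ the lower bound is vacuous but the upper bound still forces $\sigma(u,\psi_\epsilon)\to 0$; if $\sigma(u,\psi)=+\infty$ then the lower bound $\sigma(u,\psi_\epsilon)\ge(1-\epsilon)\sigma(u,\psi)$ forces $\sigma(u,\psi_\epsilon)\to\infty$ as well. Since relative types to maximal weights are finite nonnegative numbers by the discussion after (\ref{eq:reltype}), the generic case $0<\sigma(u,\psi)<\infty$ is the substantive one and is handled by the displayed two-sided estimate. I would also remark that $\psi$ here is a non-maximal weight in general, so $\sigma(u,\psi)$ is understood via Proposition \ref{prop:nonmax} as $\sigma(u,g_\psi)$; but this causes no trouble because $g_\psi$ satisfies the same relations (\ref{eq:ass}) with the same $\psi_\epsilon$ (up to $O(1)$), and $\psi_\epsilon\in\AW$ is equivalent to a maximal weight, so (\ref{eq:triangle}) applies verbatim.

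There is no real obstacle here — the statement is essentially a restatement of the metric estimate $\rho(\psi_\epsilon,\psi)\to 0$ transported through relative types via (\ref{eq:triangle}). The mildly delicate bookkeeping is just making sure the $O(1)$ terms in (\ref{eq:ass}) are harmless (they are, since relative types ignore bounded perturbations) and that one uses (\ref{eq:triangle}) in both directions rather than only (\ref{eq:rtbound}). I expect the write-up to be three or four lines.
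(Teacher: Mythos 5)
Your proposal is correct and arrives at precisely the two-sided estimate that the paper records as its entire proof, namely $(1-\epsilon)\sigma(u,\psi)\le\sigma(u,\psi_\epsilon)\le(1+\epsilon)\sigma(u,\psi)$; routing it through (\ref{eq:triangle}) instead of reading the bound off (\ref{eq:ass}) directly is a cosmetic variation, not a different argument. The extra remarks about degenerate values of $\sigma(u,\psi)$ and about replacing $\psi$ by $g_\psi$ are sound but inessential, since relative types are finite by definition and the sandwich (\ref{eq:ass}) already gives the estimate by monotonicity without invoking the triangle inequality.
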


\begin{proof} By (\ref{eq:ass}),
$(1-\epsilon)\sigma(u,\psi)\le \sigma(u,\psi_\epsilon)\le (1+\epsilon)\sigma(u,\psi)$.
\end{proof}

\begin{proposition}\label{cor:cont1} If $\phi\in\AAW$, then $\sigma(u,\Df_k\phi)\to\sigma(u,\phi)$ for every $u\in\PSH_0$.
\end{proposition}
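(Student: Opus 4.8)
The plan is to exploit Theorem~\ref{theo:AAMW}, which gives the uniform estimate $(1-2\epsilon)G_\psi\le G_{\Df_k\psi}\le G_\psi$ on $D\setminus\{x\}$ for all $k\ge k(\epsilon)$. Since $G_{\Df_k\psi}\in{\rm cl}(\Df_k\psi)$ and $G_\psi\in{\rm cl}(\psi)$, this sandwich translates into the asymptotic inequalities
$$(1-2\epsilon)\psi+O(1)\le\Df_k\psi\le\psi+O(1)\quad\text{near }x,$$
valid for $k\ge k(\epsilon)$. So a weight in $\AAW$ is approximated from above by its own Demailly sequence in exactly the two-sided way that appears in (\ref{eq:ass}), with $\psi_\epsilon=\Df_k\psi$.

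From here the argument is the same as Proposition~\ref{cor:convergeps}. Applying the definition (\ref{eq:reltype}) of the relative type to the two inequalities above and using that $\sigma(u,\cdot)$ reverses the order while scaling linearly in the second argument (more precisely, $\sigma(u,c\phi)=c^{-1}\sigma(u,\phi)$ and $\sigma(u,\phi_1)\ge\sigma(u,\phi_2)$ when $\phi_1\le\phi_2+O(1)$ and both are maximal weights), one gets
$$\sigma(u,\psi)\le\sigma(u,\Df_k\psi)\le(1-2\epsilon)^{-1}\sigma(u,\psi)\quad\text{for }k\ge k(\epsilon).$$
Letting $k\to\infty$ and then $\epsilon\to 0$ yields $\sigma(u,\Df_k\psi)\to\sigma(u,\psi)$, for every $u\in\PSH_x$. (One should note that the Demailly approximations $\Df_k\psi$ have analytic singularities, hence are equivalent to maximal weights, so $\sigma(u,\Df_k\psi)$ is well defined in the first place.)

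The only point requiring a little care — and it is really the heart of the matter — is the passage from the uniform inequality for the Green functions to the $O(1)$-inequality for the weights themselves; but this is immediate from $G_{\Df_k\psi}=\Df_k\psi+O(1)$ and $G_\psi=\psi+O(1)$ near $x$, both of which are part of the definition of the Green--Zahariuta function recalled in Section~\ref{ssec:Greenf}. There is no genuine obstacle here: the statement is essentially a corollary of Theorem~\ref{theo:AAMW} combined with the elementary monotonicity and homogeneity of relative types, so the proof can be kept to a couple of lines.
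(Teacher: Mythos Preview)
Your approach is correct and is exactly what the paper intends: the proposition is stated as ``a direct consequence of Theorem~\ref{theo:AAMW}'', and you have spelled out that consequence via the monotonicity and homogeneity of $\sigma(u,\cdot)$.

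However, you have two sign errors that happen to cancel. First, the sandwich from Theorem~\ref{theo:AAMW} reads $1-2\epsilon\le G_{\Df_k\psi}/G_\psi\le 1$ with $G_\psi<0$, which after multiplying through gives
\[
G_\psi\le G_{\Df_k\psi}\le(1-2\epsilon)G_\psi,\qquad\text{i.e.}\quad \psi+O(1)\le\Df_k\psi\le(1-2\epsilon)\psi+O(1),
\]
the reverse of what you wrote (and consistent with $\Df_k\psi\ge\psi-C/k$ from (\ref{eq:Dap})). Second, the monotonicity goes the other way: if $\phi_1\le\phi_2+O(1)$ then $\sigma(\phi_1,\phi_2)\ge1$, and (\ref{eq:triangle}) gives $\sigma(u,\phi_2)\ge\sigma(u,\phi_1)\sigma(\phi_1,\phi_2)\ge\sigma(u,\phi_1)$; so $\sigma(u,\cdot)$ \emph{preserves} the order, it does not reverse it. Applying the correct monotonicity to the correct sandwich yields the same final estimate
\[
\sigma(u,\psi)\le\sigma(u,\Df_k\psi)\le(1-2\epsilon)^{-1}\sigma(u,\psi),
\]
so your conclusion stands, but the intermediate displays should be fixed.
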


\begin{proof} This is a direct consequence of Theorem~\ref{theo:AAMW}.
\end{proof}

\medskip
A corresponding, less obvious property for inf-analytic singularities reads as follows.

\begin{proposition}\label{cor:cont2} If $\phi\in\UAW$, then $\sigma(u,\Df_{m!}\,\phi)\searrow\sigma(u,\phi)$ as $m\to\infty$,  for every function $u\in\PSH_0$.
\end{proposition}

\begin{proof}
We denote $\vph_m=G_{\Df_{m!}\,\phi}$. By Proposition~\ref{prop:hA} and Corollary~\ref{cor:hA1}, the sequence $\vph_m$ decreases to $G_\phi$. Therefore, $\sigma(u,\vph_m)$ is a decreasing sequence for any $u\in\PSH_0$. Let us show that its limit $\sigma(u)$ coincides with $\sigma(u,\phi)$.

The functional $\sigma:\PSH_0\to [0,\infty]$ satisfies $\sigma(\max\{u,v\})=\min\{\sigma(u),\sigma(v)\}$, is positive on $\log|z|$, finite on all $u\not\equiv-\infty$, positive homogeneous, and lower semicontinuous because if $u_k\to u$ in $L_{loc}^1$, then for each $m$,
$$ \limsup_{k\to\infty}\sigma(u_k)\le \limsup_{k\to\infty}\sigma(u_k,\vph_m)\le \sigma(u,\vph_m).$$
Therefore, by \cite[Thm. 4.3]{R7}, $\sigma(u)=\sigma(u,\psi)$ for some weight $\psi\in\MW$. Observe that $\sigma(u,\psi)\ge \sigma(u,\phi)$ for all $u$, so $G_\psi\ge G_\phi$. On the other hand, $\sigma(u,\psi)\le \sigma(u,\vph_m)$ for all $m$ and hence $G_\psi\le G_\phi$, which implies $\sigma(u,\psi)=\sigma(u,\phi)$.
\end{proof}

\medskip
More advanced special properties of types relative to such weights are given below.

\subsection{Types of Demailly's approximants}

As stated in Demailly's Approximation Theorem, the Lelong numbers of $\Df_ku$ converge to that of $u$ for every $u\in\PSH_0$. Here we extend this to types of $\Df_ku$ relative to asymptotically analytic weights.

We first relate the $\vph$-types of the functions $\Df_ku$
and $u$ for exponentially H\"older continuous weights $\vph$.

\begin{lemma} \label{prop:appr} Let $\vph\in \MW$ satisfy (\ref{eq:holder}) and let $u\in \PSH_0$. Then the types $\sigma(\Df_ku,\vph)$ of its Demailly approximants $\Df_ku$ satisfy the relations
\beq\label{eq:d2}
\sigma(u,\vph)-\frac{n}{k\beta}\le\sigma(\Df_ku,\vph)\le\sigma(u,\vph).  \eeq
\end{lemma}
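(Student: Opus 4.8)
The plan is to read off both inequalities directly from Demailly's basic estimates \eqref{eq:Dap} together with the Hölder condition \eqref{eq:holder} on $\vph$. The upper bound $\sigma(\Df_ku,\vph)\le\sigma(u,\vph)$ should be the easy half: the left inequality in \eqref{eq:Dap}, namely $u(z)-C/k\le\Df_ku(z)$, gives $\Df_ku\ge u-C/k$, hence for the sup-functions \eqref{eq:Lambda} we get $\Lambda(\Df_ku,\vph,r)\ge\Lambda(u,\vph,r)-C/k$, and dividing by $r$ and letting $r\to-\infty$ in the formula \eqref{eq:reltype} for the relative type yields $\sigma(\Df_ku,\vph)\le\sigma(u,\vph)$ (the sign flips because $r<0$). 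Note this half does not even need the Hölder hypothesis.

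For the lower bound I would use the right inequality in \eqref{eq:Dap}: $\Df_ku(z)\le\sup_{|\zeta-z|<r}u(\zeta)+\frac1k\log(C/r^n)$, with $r$ a free parameter to be optimized. The point is to control the loss incurred by replacing $u(z)$ by its sup over a ball $B(z,r)$ in terms of the weight $\vph$. Here \eqref{eq:holder} enters: writing $\vph$ in exponential form, $e^{\vph(\zeta)}\le e^{\vph(z)}+|\zeta-z|^\beta\le e^{\vph(z)}+r^\beta$ for $\zeta\in B(z,r)$, so the sublevel set $\{\vph<t\}$ is, up to enlarging by a ball of radius $\approx(e^{t'}-e^{t})^{1/\beta}$, contained in $\{\vph<t'\}$ for $t'>t$. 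Concretely, if I choose $r=r(z)$ comparable to $e^{\vph(z)/?}$ — the natural scaling is to take $r$ so that $r^\beta$ is a small multiple of $e^{t}$ when working on $\{\vph<t\}$ — then $\sup_{B(z,r)}u\le\Lambda(u,\vph,t+\delta)$ for a controlled $\delta$, while the error term $\frac1k\log(C/r^n)$ becomes $\frac{n}{k\beta}|t|+O(1/k)$. Taking the sup over $\{\vph(z)<t\}$, dividing by $t$ and sending $t\to-\infty$ then produces exactly $\sigma(\Df_ku,\vph)\ge\sigma(u,\vph)-\frac{n}{k\beta}$, where the constant $n/(k\beta)$ comes from the $\log r^n = n\log r$ against the $\beta$ in the Hölder exponent and the $1/k$ from \eqref{eq:appr1}.

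\textbf{Main obstacle.} The delicate point is the bookkeeping in the lower bound: one must choose the radius $r=r(z)$ depending on $z$ (shrinking as $z\to x$, i.e. as $\vph(z)\to-\infty$) so that simultaneously (a) the enlarged sublevel set $\{\vph<t+\delta\}$ still has $\delta=o(|t|)$ as $t\to-\infty$, so that $\Lambda(u,\vph,t+\delta)/t\to\sigma(u,\vph)$, and (b) the term $\frac1k\log(C/r(z)^n)$ contributes asymptotically exactly $\frac{n}{k\beta}|t|$ and no more. Getting the sharp constant $n/(k\beta)$ — rather than something larger — requires matching $r^\beta \sim \text{const}\cdot e^{t}$ precisely, so that $\log r \sim \frac1\beta\log e^{t} = t/\beta$; any slackness in this choice would degrade the constant. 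Once the scaling is pinned down, the rest is the routine passage to the $\liminf$ defining $\sigma$, using that $t<0$ so division by $t$ reverses inequalities.
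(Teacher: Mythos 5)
Your plan is correct and coincides with the paper's proof: the upper bound is read off from the left inequality in (2.7) exactly as you say, and the lower bound is obtained by taking the radius $\delta$ with $\log\delta=r/\beta$ (i.e.\ $\delta^\beta=e^r$), which is precisely your scaling $r^\beta\sim e^t$; the Hölder estimate then enlarges $\{\vph<r\}$ only to $\{\vph<r+\log2\}$ (an $O(1)$ shift, even better than the $o(|t|)$ you require), and $\tfrac1k\log(C/\delta^n)$ contributes exactly $-\tfrac{n}{k\beta}r$.
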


\begin{proof} The first inequality in (\ref{eq:Dap}) implies the
relation $\sigma(\Df_ku,\vph)\le\sigma(u,\vph)$.

 If $\vph(z)<r$
and $\log|z-\zeta|<r/\beta$, then (\ref{eq:holder}) implies
$\vph(\zeta)\le r+\log2$. Therefore, the second inequality in
(\ref{eq:Dap}) with $\log\delta=r/\beta$ yields
$$\Lambda(\Df_ku,\vph,r)\le \Lambda(u,\vph,r +\log2) -\frac{n}{k\beta}\,r +\frac{C_2}{k},$$
where $\Lambda$ is defined by (\ref{eq:Lambda}),
and then (\ref{eq:reltype}) gives us the first inequality in (\ref{eq:d2}).
\end{proof}

\medskip
This implies the convergence $\sigma(\Df_ku,\psi)\to\sigma(u,\psi)$ for $\psi\in \AAW$; moreover, it turns out to be one more characteristic property of this class of weights.

\begin{theorem}\label{cor:converg} A weight $\psi\in \MW$ is asymptotically  analytic if and only if \beq\label{eq:convtypes}\sigma(\Df_ku,\psi)\underset{k\to\infty}\longrightarrow\sigma(u,\psi)\quad \forall u\in\PSH_0.\eeq
\end{theorem}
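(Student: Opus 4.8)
The plan is to prove the two implications separately. For the forward implication (asymptotic analyticity $\Rightarrow$ (\ref{eq:convtypes})) I would run a ``diagonal'' comparison through an analytic approximant, with Lemma~\ref{prop:appr} as the engine; the converse falls out almost immediately on testing the hypothesis at $u=\psi$. Throughout one may assume $\psi\in\MW$: a weight $\psi\in\Wx$ which is, or is forced by the hypothesis to be, asymptotically analytic may be replaced by its complete greenification $g_\psi\in\MW$, and this changes neither $\sigma(u,\psi)$, $\sigma(\Df_ku,\psi)$, nor membership in $\AAW$ (Proposition~\ref{prop:nonmax}).

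\emph{Necessity.} Assume (\ref{eq:convtypes}) and apply it at $u=\psi$: then $\sigma(\Df_k\psi,\psi)\to\sigma(\psi,\psi)=1$. I claim that for each $\epsilon\in(0,1)$ the weight $\psi_\epsilon:=\Df_k\psi$ satisfies (\ref{eq:ass}) for all sufficiently large $k$. Indeed, the left-hand inequality in (\ref{eq:Dap}) gives $\psi\le\Df_k\psi+O(1)$ near $x$; since $\Df_k\psi<0$ near $x$, this already yields $\psi\le(1-\epsilon)\Df_k\psi+O(1)$, the upper half of (\ref{eq:ass}). Next, $\sigma(\Df_k\psi,\psi)\to1$ forces $\nu(\psi,x)>0$: otherwise, along the subsequence $\Df_{m!}\psi$ the Lelong numbers would increase (by the subadditivity estimate used in the proof of Proposition~\ref{lem:1}) to $\nu(\psi,x)=0$, hence vanish identically, making $\Df_{m!}\psi$ bounded near $x$ and $\sigma(\Df_{m!}\psi,\psi)=0$; so $\nu(\Df_k\psi,x)\to\nu(\psi,x)>0$ and $\Df_k\psi\in\AW$ for all large $k$. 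Finally, choose $k$ so large that $\sigma(\Df_k\psi,\psi)\ge(1+\epsilon)^{-1}$; applying the bound (\ref{eq:rtbound}) to $\Df_k\psi$ and the maximal weight $\psi$ gives $\Df_k\psi\le\sigma(\Df_k\psi,\psi)\,\psi+O(1)\le(1+\epsilon)^{-1}\psi+O(1)$, the lower half of (\ref{eq:ass}). Hence $\psi\in\AAW$.

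\emph{Sufficiency.} Let $\psi\in\AAW\cap\MW$, $u\in\PSH_x$, and $\epsilon\in(0,1)$; pick $\psi_\epsilon\in\AW$ as in (\ref{eq:ass}). A weight with analytic singularity is equivalent to one of the form $c\log|\xi|$ with $\xi$ holomorphic, hence exponentially H\"older continuous, so Lemma~\ref{prop:appr} applies to $\psi_\epsilon$ (with some exponent $\beta_\epsilon>0$) and gives $\sigma(u,\psi_\epsilon)-n/(k\beta_\epsilon)\le\sigma(\Df_ku,\psi_\epsilon)\le\sigma(u,\psi_\epsilon)$. On the other hand (\ref{eq:ass}) gives the two-sided comparison $(1-\epsilon)\,\sigma(w,\psi)\le\sigma(w,\psi_\epsilon)\le(1+\epsilon)\,\sigma(w,\psi)$ for every $w\in\PSH_x$, as in the computation in Proposition~\ref{cor:convergeps}. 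Applying this with $w=u$ and with $w=\Df_ku$ and chaining with Lemma~\ref{prop:appr} produces
$$\frac{1-\epsilon}{1+\epsilon}\,\sigma(u,\psi)-\frac{n}{(1+\epsilon)k\beta_\epsilon}\le\sigma(\Df_ku,\psi)\le\frac{1+\epsilon}{1-\epsilon}\,\sigma(u,\psi).$$
Since $\sigma(u,\psi)<\infty$, letting $k\to\infty$ and then $\epsilon\to0$ gives $\sigma(\Df_ku,\psi)\to\sigma(u,\psi)$, which is (\ref{eq:convtypes}).

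I do not expect a serious obstacle: all the tools are in place, namely Lemma~\ref{prop:appr}, the elementary calculus of relative types under (\ref{eq:ass}), and the estimates (\ref{eq:Dap}), (\ref{eq:rtbound}). The one point requiring genuine care is the borderline case $\nu(\psi,x)=0$ (the famous open one), where $\Df_k\psi$ is bounded, is not a weight, and $\sigma(\cdot,\Df_k\psi)$ is not literally a relative type; but, as noted above, the hypothesis (\ref{eq:convtypes}) itself excludes this case through $u=\psi$, so it never actually arises in the necessity argument and is immaterial for sufficiency. A secondary point is the passage from the relative-type inequalities to the pointwise bounds (\ref{eq:ass}): this is exactly the equivalence recorded in the proof of the proposition following Proposition~\ref{prop:metric}, and it relies on (\ref{eq:rtbound}), which is why the preliminary reduction to maximal $\psi$ is made.
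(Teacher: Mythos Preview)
Your proof is correct and follows essentially the same route as the paper. For sufficiency you run the identical sandwich through an analytic approximant $\psi_\epsilon$, invoking Lemma~\ref{prop:appr} and the two-sided comparison $(1-\epsilon)\sigma(w,\psi)\le\sigma(w,\psi_\epsilon)\le(1+\epsilon)\sigma(w,\psi)$; for necessity you test at $u=\psi$ and use $\Df_k\psi$ itself as the approximating analytic weight, exactly as the paper does (the paper phrases this as $\rho(\psi,\Df_k\psi)\to0$, which is the same thing). Your treatment of the borderline case $\nu(\psi,x)=0$ is more explicit than the paper's, which simply takes for granted that $\Df_k\psi\in\AW$; this is a point in your favor rather than a deviation.
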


{\it Remark.} This finishes the proof of Theorem A.

\begin{proof} Relation (\ref{eq:convtypes}) for $\psi\in\AAW$ follows from (\ref{eq:ass}),  Proposition~\ref{cor:convergeps} and Lemma~\ref{prop:appr} because all the functions $\exp\psi_\epsilon$ are H\"older continuous (say, with exponents $\beta_\epsilon$):
$$ \frac1{1+\epsilon}\left({\sigma(u,\psi_\epsilon)}-\frac{n}{k\beta_\epsilon}\right)\le \frac{\sigma(\Df_ku,\psi_\epsilon)}{1+\epsilon} \le\sigma(\Df_ku,\psi)\le \frac{\sigma(\Df_ku,\psi_\epsilon)}{1-\epsilon}\le \frac{\sigma(u,\psi_\epsilon)}{1-\epsilon}.
$$
Conversely, relation (\ref{eq:convtypes}) implies $\sigma(\Df_k\psi,\psi)\to1$ and, since $\sigma(\psi,\Df_k\psi)\ge 1$, the inequalities $\psi+O(1)\le \Df_k\psi\le (1-\epsilon)\psi+O(1)$ for $k\ge k_\epsilon$.
\end{proof}

\subsection{Representation by divisorial valuations}

Here we describe inf-analytic weights in terms of envelopes of divisorial psh valuations. This is a variant of the corresponding results from \cite{BFaJ}, presented from the pluripotential theory point of view.

Let $\mu$ be a proper modification over a neighborhood $X$ of $0$. A {\it divisorial valuation} on functions $f\in\cO_0$ is the generic multiplicity of $\mu^*f$ over an irreducible component of the exceptional divisor $\mu^{-1}(0)$. One extends this notion to psh functions $u$ by replacing the multiplicity of $\mu^*f$ with the generic Lelong number (\ref{eq:gLel}) of $\mu^*u$ over the component.
Here we will represent types relative to inf-analytic weights as envelopes of such valuations.

Let us take an analytic weight $\varphi=\log|F|\in \Wx$. By the Hironaka desingularization theorem, there exists a log resolution for the mapping $F$, i.e., a proper holomorphic mapping $\mu$ of a manifold $\hat X$ to a neighborhood $U$ of $0$, that is an isomorphism between $\hat X\setminus\mu^{-1}(0)$ and $U\setminus\{0\}$, such that $\mu^{-1}(0)$ is a normal crossing divisor with components $E_1,\ldots,E_N$, and in local coordinates centered at a generic point $p$ of a nonempty intersection $ E_I=\cap_{i\in I} E_{i}$, where $I\subset \{1,\ldots,N\}$,
the pullback of $F$ has the form
$$(F\circ\mu)(\hat x)=h(\hat x)\prod_{i\in I}\hat x_i^{m_i}$$
with $h(0)\neq 0$. Then for any $u\in \PSH_0$, the type relative to the weight $\varphi=\log|F|$ computes evidently as
\beq\label{eq:minnu}\sigma(u,\varphi)=\min \{\nu_{I,m_I}(\mu^*u): E_I\neq\emptyset\},\eeq where
$$ \nu_{I,m_I}(\mu^*u)=
\liminf_{\hat x\to 0}\frac{(\mu^*u)(\hat x)}{\sum_{i\in I} m_i\log|\hat x_i|}$$
at a generic point $p\in E_I$.

We need the following elementary result.

\begin{lemma}\label{lem:rees} Let $v(t)$ be a negative convex function on $\Rnk$, increasing in each $t_i$. Then
\beq\label{eq:elemlemma}\liminf_{t\to-\infty}\frac{v(t)}{\sum_it_i}= \min_i\liminf_{t_i\to-\infty}\frac{v(t)}{t_i}.\eeq
\end{lemma}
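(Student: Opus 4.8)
The plan is to establish (\ref{eq:elemlemma}) by proving two inequalities. The inequality $\ge$ is the easy direction: for each fixed $i$, restricting the limit to a path on which all coordinates $t_j$, $j\neq i$, are held bounded while $t_i\to-\infty$ forces $\sum_j t_j = t_i + O(1)$, so along that path $v(t)/\sum_j t_j$ and $v(t)/t_i$ have the same lower limit; hence the full $\liminf$ over $t\to-\infty$ on the left is bounded above by each $\liminf_{t_i\to-\infty} v(t)/t_i$ (here I use that $v$ is increasing in each variable to control the behavior of $v$ when the other coordinates are merely bounded rather than fixed — more precisely, $v$ evaluated with bounded coordinates is squeezed between $v$ with those coordinates set to $0$ and to some fixed negative constant). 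Taking the minimum over $i$ gives ``$\le\min_i$''.

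For the reverse inequality I would use convexity together with monotonicity. Write $a_i = \liminf_{t_i\to-\infty} v(t)/t_i$ (which, by monotonicity in the other variables, is a well-defined number independent of how the remaining coordinates are sent to $-\infty$ or kept bounded), and set $a=\min_i a_i\ge 0$. The goal is $v(t)\le (a+o(1))\sum_i t_i$ as $t\to-\infty$, i.e. $\liminf v(t)/\sum t_i \ge a$. Since $v$ is convex, negative, and increasing in each variable on $\Rnk$, it is dominated below (in the sense that its recession/asymptotic behavior is captured) by its supporting affine functions; I would show that $v(t)\le \langle b,t\rangle + C$ for some $b$ in the closed positive cone with $\min_i b_i \le$ the relevant asymptotic slopes, and relate $b_i$ to $a_i$. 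Concretely: fix a direction $t = s\,\mathbf{1}$ with $s\to-\infty$; convexity of the one-variable function $s\mapsto v(s\mathbf 1)$ and its negativity give that $v(s\mathbf 1)/s$ is monotone, so $\lim_{s\to-\infty} v(s\mathbf 1)/s =: \ell$ exists. One checks $\ell = \liminf_{t\to-\infty} v(t)/\sum t_i$ (the diagonal is extremal because $v$ is increasing, so moving any coordinate toward $-\infty$ faster only helps the quotient in a controlled way), and separately $\ell \ge a_i$ is forced for each $i$ by comparing the diagonal approach with coordinate-wise approaches using monotonicity and convexity. I expect the cleanest route is to pass to $\tilde v(t)=\sup\{\langle b,t\rangle : b\in\Gamma\}$, the convex positively-homogeneous ``indicator'' of $v$ at infinity (the function whose graph is the recession cone of the graph of $v$), for which both sides of (\ref{eq:elemlemma}) are unchanged, and then the identity is the elementary fact that for $\tilde v(t)=\sup_{b\in\Gamma}\langle b,t\rangle$ with $\Gamma\subset\Rnp$, one has $\inf_{t\in\Rnk} \tilde v(t)/\sum t_i = \inf_{b\in\Gamma} \min_i b_i = \min_i \inf_{t_i<0}\tilde v(t)/t_i$, which follows by inspecting where the supremum defining $\tilde v$ is attained.

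The main obstacle I anticipate is justifying the reduction to the homogeneous case cleanly, i.e. proving that passing from $v$ to its asymptotic cone function $\tilde v$ preserves both $\liminf_{t\to-\infty} v(t)/\sum t_i$ and each $\liminf_{t_i\to-\infty} v(t)/t_i$; this uses convexity (to get existence of one-sided directional recession slopes) plus monotonicity (to ensure the $\liminf$ in $t_i$ alone is insensitive to what the other coordinates do). Once that reduction is in place the homogeneous identity is a short computation. Everything else — the direction ``$\le$'' and the bookkeeping with the constant $C$ — is routine. I would present the proof in the order: (1) direction ``$\ge$'' by restricting to coordinate rays; (2) reduce to the positively homogeneous case via the recession function; (3) prove the identity for $\tilde v=\sup_{b\in\Gamma}\langle b,\cdot\rangle$ directly.
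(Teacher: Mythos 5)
Your proposal takes a genuinely different route from the paper's, and while the overall plan (reduce to the positively homogeneous ``recession'' function and then verify the identity for $\tilde v(t)=\sup_{b\in\Gamma}\langle b,t\rangle$) is reasonable and the homogeneous identity itself is correct, there are real gaps in the two places where you yourself signal uncertainty.

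First, the ``concrete'' intermediate step is wrong: the diagonal is \emph{not} extremal. Take $k=2$ and $v(t)=\max\{t_1,t_2\}$, which is convex, negative and increasing in each variable on $\R_-^2$. Along paths where one coordinate goes to $-\infty$ and the other stays near $0$, both $v(t)/(t_1+t_2)$ and $v(t)/t_i$ tend to $0$, so the left side of (\ref{eq:elemlemma}) and both $A_i$ equal $0$. But along the diagonal $v(s,s)/(2s)=1/2$ for all $s$, so $\ell=1/2\neq 0$. Thus ``$\ell=\liminf v(t)/\sum t_i$'' fails, and the remark that ``moving any coordinate toward $-\infty$ faster only helps the quotient in a controlled way'' does not rescue it. Relatedly, the claim that $a_i$ is ``independent of how the remaining coordinates are sent to $-\infty$ or kept bounded'' is also off: the $\liminf$ is of course a single number, but the path-wise limits genuinely differ (smallest when the other coordinates stay near the boundary $t_j\to 0^-$, larger when they go to $-\infty$); this distinction matters for the reduction you want to make.

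Second, the reduction to the recession function -- i.e.\ the assertion that replacing $v$ by $\tilde v$ leaves both sides of (\ref{eq:elemlemma}) unchanged -- is left as the ``main obstacle'' and is not carried out. It is not automatic: one needs the bound $v(t)\le v(t^*)+v0^+(t-t^*)$ (which holds for convex $v$) in one direction and an argument along rays in the other, and the identification of $\liminf_{t_i\to-\infty}v(t)/t_i$ with the one-coordinate recession slope requires care at the boundary of $\R_-^k$. The sentence ``dominated below \dots by its supporting affine functions; I would show that $v(t)\le\langle b,t\rangle+C$'' also mixes up the direction: supporting hyperplanes give lower bounds on a convex function, whereas the upper bound you need comes from the monotonicity of the one-variable difference quotients, not from support.

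The paper avoids all of this with a short direct argument. Setting $A_i=\liminf_{t_i\to-\infty}v(t)/t_i$, one-variable convexity in each coordinate shows that the difference quotient $(v(t_1,t_2^*,\dots,t_k^*)-v(t^*))/(t_1-t_1^*)$ decreases as $t_1\to-\infty$ to a limit which is $\ge A_1$; telescoping coordinate by coordinate gives $v(t)-v(t^*)\le\sum_i A_i(t_i-t_i^*)$, and since each $t_i-t_i^*<0$ and $A_i\ge\min_j A_j\ge 0$ this is $\le(\min_j A_j)\bigl(\sum_i t_i-\sum_i t_i^*\bigr)$. Dividing by $\sum_i t_i\to-\infty$ gives the hard inequality immediately, with no homogenization and no boundary analysis. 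If you want to salvage your route, you must both drop the diagonal claim and fully justify the passage to $\tilde v$; as written, the proof is incomplete.
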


\noindent
\begin{proof} Denote the left hand side of (\ref{eq:elemlemma}) by $A$ and
$$A_i=\liminf_{t_i\to-\infty}\frac{v(t)}{t_i}.$$
From the convexity of $v$ it follows that for any point $t^*$, the ratio
$$\frac{v(t_1,t_2^*,\ldots,t_k^*)-v(t_1^*,t_2^*,\ldots,t_k^*)}{t_1-t_1^*}$$
decreases when $t_1\to-\infty$ and thus
$$ v(t_1,t_2^*,\ldots,t_k^*)-v(t_1^*,t_2^*,\ldots,t_k^*)\le A_1(t_1-t_1^*),\quad t_1<t_1^*.$$
Similarly,
$$ v(t_1,t_2,t_3^*,\ldots,t_k^*)-v(t_1,t_2^*,t_3^*,\ldots,t_k^*)\le A_2(t_2-t_2^*),\quad t_2<t_2^*,$$
and so on, the last inequality being
$$ v(t_1,\ldots,t_k)-v(t_1,\ldots,t_{k-1},t_k^*)\le A_k(t_k-t_k^*),\quad t_k<t_k^*.$$
Summing up the inequalities, we get
$$v(t)-v(t^*)\le \sum_iA_i(t_i-t_i^*)\le\min_iA_i\left(\sum_i t_i-\sum_i t_i^*\right),$$
which gives $A\ge\min_iA_i$. The reverse inequality is evident.
\end{proof}

\medskip

From Lemma~\ref{lem:rees} applied to the function $$v(t)=\sup\{\mu^*u(\hat x):\: \log|\hat x_i|<m_i^{-1}t_i,\ i\in I\}$$ we deduce
$$\nu_{I,m_I}(\mu^*u)= \min_{i\in I}\,\nu_{i,m_i}(\mu^*u)=\min_{i\in I}\,m_i^{-1}\nu_{E_i}(\mu^*u),$$
where $\nu_{E_i}(\mu^*u)$ is the generic Lelong number (\ref{eq:gLel}) of $\mu^*u$ along $E_i$. Following \cite{BFaJ}, we will call it the {\it divisorial valuation} of $u\in\PSH_0$ along $E_i$:
$$ {\cal R}_{\mu,i}(u)=\nu_{E_i}(\mu^*u)=\inf\{\nu(\mu^*u,p):\:p\in E_i\}.
$$

Now (\ref{eq:minnu}) gives us the following result.

\begin{theorem}\label{theo:anue} For any weight $\vph=\log|F|$, there exist
finitely many divisorial valuations ${\cal R}_j$ and positive integers $m_j$ such that
$\sigma(u,\vph)=\min_j\, m_j^{-1}{\cal R}_j(u)$ for every $u\in\PSH_0$.
\end{theorem}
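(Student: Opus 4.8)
The plan is to derive Theorem~\ref{theo:anue} directly from the log resolution of $\varphi=\log|F|$ together with formula~(\ref{eq:minnu}) and the reduction performed via Lemma~\ref{lem:rees}. First I would invoke Hironaka's theorem to fix a log resolution $\mu:\hat X\to U$ of the mapping $F$ as described just above, with exceptional components $E_1,\dots,E_N$ and multiplicities $m_i$ along each $E_i$ coming from the local factorization $(F\circ\mu)(\hat x)=h(\hat x)\prod_{i\in I}\hat x_i^{m_i}$ at a generic point of each nonempty stratum $E_I$. Formula~(\ref{eq:minnu}) then expresses $\sigma(u,\varphi)$ as the minimum over all nonempty strata $E_I$ of the mixed valuations $\nu_{I,m_I}(\mu^*u)$.

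Next I would apply Lemma~\ref{lem:rees}, with $v$ the function $v(t)=\sup\{\mu^*u(\hat x):\log|\hat x_i|<m_i^{-1}t_i,\ i\in I\}$, which is negative, convex and increasing in each $t_i$ on the relevant orthant; the lemma reduces each stratum-wise valuation $\nu_{I,m_I}(\mu^*u)$ to $\min_{i\in I}m_i^{-1}\nu_{E_i}(\mu^*u)$. Since every index $i$ belongs to at least one nonempty stratum $E_I$ (namely $E_{\{i\}}=E_i$ itself), taking the minimum over all strata of these reduced expressions collapses to a single minimum over the individual divisors: $\sigma(u,\varphi)=\min_{1\le i\le N} m_i^{-1}\nu_{E_i}(\mu^*u)$. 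Setting ${\cal R}_i(u)=\nu_{E_i}(\mu^*u)$, which is a divisorial valuation of $u$ by definition, and noting there are only finitely many $E_i$, this is exactly the asserted representation $\sigma(u,\varphi)=\min_j m_j^{-1}{\cal R}_j(u)$.

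I should double-check one technical point, which is the only place I expect any friction: the validity of formula~(\ref{eq:minnu}) and the convexity/monotonicity of $v$. The convexity of $v(t)$ follows because $\mu^*u$ is plurisubharmonic and the supremum over the polydisk-type region defined by $\log|\hat x_i|<m_i^{-1}t_i$ of a plurisubharmonic function, re-expressed in the logarithmic variables $t_i$, is convex and nondecreasing in each variable — this is the standard Hadamard-type three-circles behaviour. The monotonicity in each $t_i$ is immediate since enlarging $t_i$ enlarges the region. Formula~(\ref{eq:minnu}) itself is the known description of relative types via log resolutions (it is stated and used in the excerpt), so I would simply cite it; the content of the theorem is the passage from the mixed valuations over strata to the pure divisorial valuations over single components, which is precisely what Lemma~\ref{lem:rees} supplies.

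Thus the proof is short:

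\begin{proof}
Fix a log resolution $\mu:\hat X\to U$ of $F$ with exceptional primes $E_1,\dots,E_N$ and associated multiplicities $m_i$, as above. By (\ref{eq:minnu}),
$$\sigma(u,\varphi)=\min\{\nu_{I,m_I}(\mu^*u):\ E_I\neq\emptyset\}.$$
Applying Lemma~\ref{lem:rees} to the convex, coordinatewise nondecreasing function $v(t)=\sup\{\mu^*u(\hat x):\ \log|\hat x_i|<m_i^{-1}t_i,\ i\in I\}$ yields, at a generic point of $E_I$,
$$\nu_{I,m_I}(\mu^*u)=\min_{i\in I}m_i^{-1}\nu_{E_i}(\mu^*u)=\min_{i\in I}m_i^{-1}{\cal R}_i(u).$$
Since each $E_i$ is itself a nonempty stratum ($I=\{i\}$), taking the minimum over all nonempty $E_I$ gives
$$\sigma(u,\varphi)=\min_{1\le i\le N}m_i^{-1}{\cal R}_i(u),$$
which is the required representation with finitely many divisorial valuations ${\cal R}_j={\cal R}_i$ and integers $m_j=m_i$.
\end{proof}
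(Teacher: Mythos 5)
Your proof is correct and follows essentially the same route as the paper: invoke the log resolution, use formula~(\ref{eq:minnu}) to express $\sigma(u,\varphi)$ as a minimum over strata of the mixed valuations $\nu_{I,m_I}(\mu^*u)$, then apply Lemma~\ref{lem:rees} to the log-convex envelope $v(t)$ to reduce each such valuation to a minimum of single-divisor valuations $m_i^{-1}\nu_{E_i}(\mu^*u)$. Your additional remark that every singleton $E_{\{i\}}=E_i$ is itself a nonempty stratum, so the double minimum collapses, is a small clarification of a step the paper leaves implicit.
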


{\it Remark.} For the case $u=\log|f|$, this follows also from \cite[Thm. 4.1.6]{LT} because the Samuel asymptotic function $\bar\nu_\cI(f)$ (\ref{eq:Sam}) with respect to the primary ideal $\cI( F_1,\ldots,F_N)$  coincides with the relative type of $\log|f|$ with respect to the weight $\log|F|$. Conversely, one can deduce Theorem~\ref{theo:anue} from that result by applying Theorem~\ref{cor:converg}.

\begin{theorem}\label{theo:anue1}  If $\psi\in\MW$, then $\psi\in \UAW$ if and only if there exist denumerably many
divisorial valuations ${\cR}_j$ and positive numbers $s_j$ such that
\beq\label{eq:inf}\sigma(u,\psi)=\inf_j\, s_j\,{\cR}_j(u)\quad\forall u\in\PSH_0.\eeq
\end{theorem}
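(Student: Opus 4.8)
\textbf{Proof proposal for Theorem~\ref{theo:anue1}.} The plan is to prove the two implications separately, using Theorem~\ref{theo:uaw} (or equivalently Corollary~\ref{cor:hA1}) to pass between inf analyticity and the Demailly greenifications, and Theorem~\ref{theo:anue} to handle the analytic building blocks.

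First I would prove that $\psi\in\UAW$ implies the representation \eqref{eq:inf}. By definition of $\UAW$ there is a sequence $\phi_j\in\AW$ with $\psi\le\phi_j+O(1)$ and $\tau_{\phi_j}\to\tau_\psi$; equivalently $\sigma(u,\psi)\le\sigma(u,\phi_j)$ for all $u$. Apply Theorem~\ref{theo:anue} to each $\phi_j$: there are finitely many divisorial valuations $\cR_{j,l}$ and positive integers $m_{j,l}$ with $\sigma(u,\phi_j)=\min_l m_{j,l}^{-1}\cR_{j,l}(u)$. Collecting all of these over $j$ produces a countable family $\{\cR_i\}$ and numbers $s_i$ such that $\inf_i s_i\cR_i(u)=\inf_j\sigma(u,\phi_j)$. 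It remains to identify this infimum with $\sigma(u,\psi)$. One inequality ($\ge\sigma(u,\psi)$) is immediate. For the reverse, I would argue via the Green functions: the functional $u\mapsto\inf_j\sigma(u,\phi_j)$ is, by \cite[Thm. 4.3]{R7} (used exactly as in the proof of Proposition~\ref{cor:cont2}), of the form $\sigma(u,\tilde\psi)$ for some $\tilde\psi\in\MW$ with $G_{\tilde\psi}=\inf_j G_{\phi_j}$; but by Theorem~\ref{theo:uaw} and Proposition~\ref{prop:hA} the inf analyticity of $\psi$ means $G_\psi=\widehat G_\psi=h_\psi^A\le G_{\phi_j}$ for all $j$, hence $G_\psi\le G_{\tilde\psi}$, i.e. $\sigma(u,\psi)\le\sigma(u,\tilde\psi)=\inf_j s_j\cR_j(u)$, giving equality. (Here I should double-check that the $\phi_j$ may be taken so that $G_{\phi_j}$ actually decreases to $G_\psi$, which is what $\phi\in\UAW$ grants through $\phi_j=\Df_{j!}\psi$.)

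Conversely, suppose \eqref{eq:inf} holds. Each divisorial valuation $\cR_j$ is itself a relative type: it equals $\sigma(\cdot,\log|F_j|)$ up to the multiple $m_j$ for a suitable monomial-type analytic weight coming from the blow-up defining $\cR_j$ — more precisely, on the relevant exceptional chart one has $\cR_j(u)=\nu_{E_j}(\mu^*u)$, and $s_j\cR_j$ is dominated by (in fact realized as a component of) the type relative to an analytic weight $\eta_j\in\AW$. I would then take $\phi_j$ to be a suitable finite combination/approximation of these $\eta_i$'s so that $\sigma(u,\phi_j)\downarrow\inf_i s_i\cR_i(u)=\sigma(u,\psi)$; by Lemma~\ref{lem:conv} the corresponding Green functions $G_{\phi_j}$ decrease with $\tau_{\phi_j}\to\tau_\psi$, so $\psi$ is inf analytic by definition (after passing to $g$-greenifications if needed, cf. Proposition~\ref{prop:convmasses}). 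The convergence of masses is the crux: it follows because $G_{\phi_j}\searrow G_\psi$ forces $\tau_{\phi_j}=\tau_{G_{\phi_j}}\to\tau_{G_\psi}=\tau_\psi$ by the continuity of the Monge-Amp\`ere operator along decreasing sequences, exactly as in Lemma~\ref{lem:conv}(i).

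The main obstacle I anticipate is the bookkeeping in the converse direction — arranging a genuine sequence $\phi_j\in\AW$ (not merely a countable family of valuations) whose relative types decrease to $\inf_i s_i\cR_i$ while keeping $\tau_{\phi_j}\to\tau_\psi$. A countable infimum of divisorial valuations need not a priori be the relative type of any single analytic weight, so one must truncate to finitely many valuations at each stage and check that the truncated infima still converge to $\sigma(\cdot,\psi)$ pointwise on all of $\PSH_x$, then invoke \cite[Thm. 4.3]{R7} to realize each truncation as $\sigma(\cdot,\phi_j)$ with $\phi_j$ having analytic (indeed divisorial) singularity. Monotonicity of the truncated infima together with lower semicontinuity of the limiting functional should make this work, but it is the step that needs care; the forward direction is comparatively routine given Theorem~\ref{theo:anue} and the greenification machinery.
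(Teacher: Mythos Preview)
Your forward direction is essentially the paper's argument. Once you commit to $\phi_j=\Df_{j!}\,\psi$ (as you yourself note you should), the identity $\inf_j\sigma(u,\phi_j)=\sigma(u,\psi)$ is precisely Proposition~\ref{cor:cont2}, and the paper simply cites that together with Theorem~\ref{theo:anue}. Your inline reconstruction via \cite[Thm.~4.3]{R7} is exactly how Proposition~\ref{cor:cont2} is proved, so there is nothing new or missing here.

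The converse has a genuine gap. You assert that the weight realizing a finite truncation $\min_{i\le j} s_i\cR_i(\cdot)$ has ``analytic (indeed divisorial) singularity'', and earlier that each $s_j\cR_j$ ``equals $\sigma(\cdot,\log|F_j|)$ up to the multiple $m_j$''. This is not correct: a single divisorial valuation is in general \emph{not} the relative type with respect to an analytic weight (Theorem~\ref{theo:anue} goes the other way --- an analytic weight gives a \emph{minimum} of several divisorial valuations, and not every divisorial valuation is the unique Rees valuation of some primary ideal). What is true, and what the paper uses, is that each divisorial valuation is represented by a \emph{tame} weight $\psi_j$ (this is the content of \cite{BFaJ} alluded to in the paper), and that the best plurisubharmonic minorant of a minimum of finitely many tame weights is again tame, hence lies in $\AAW$. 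So the truncated weights $\chi_m$ you construct are only in $\AAW$, not in $\AW$. One then needs the extra step of passing from each $\chi_m\in\AAW$ to a nearby $\phi_m\in\AW$ (e.g.\ via (\ref{eq:Dapt1}) applied to $\chi_m$) with $G_{\phi_m}\ge G_\psi$ and $\tau_{\phi_m}$ close to $\tau_{\chi_m}$, before concluding $\psi\in\UAW$. Your outline omits tameness entirely and jumps straight to $\phi_j\in\AW$, which is the missing idea.
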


{\it Remark.} This completes the proof of Theorem B.

\begin{proof} Let $\psi\in\UAW$. By Theorem~\ref{theo:anue}, the type $\sigma(u,\Df_{m!}\,\psi)$ for each $m$ is the lower envelope of finitely many weighted divisorial valuations $s_j{\cR}_j(u)$. Then (\ref{eq:inf}) follows from Proposition~\ref{cor:cont2}.

Conversely, (\ref{eq:inf}) means that $G_\psi$ is the best psh minorant of the function $\inf_j G_{\psi_j}$, where the weights $\psi_j$ are such that $\sigma(u,\psi_j)=s_j{\cal R}_j(u)$, see the proof of Proposition~\ref{cor:cont2}. All $\psi_j$ are, by \cite[Thm. 2.13]{BFaJ}, tame (and thus asymptotically analytic, see Example~\ref{ex:tame}). Since the best psh minorant of the minimum of finitely many tame weights is tame as well,
there exists a sequence of weights $\vph_j$ with analytic singularities whose Green functions $G_{\vph_j}$ decrease to $G_\psi$. Therefore, $\psi\in\UAW$.
\end{proof}
\medskip

{\it Remarks}. 1. For tame weights, the representation (\ref{eq:inf}) is proved in \cite{BFaJ}.
For asymptotically analytic weights, the arguments were sketched in \cite{R9}.

2. Relative types have the obvious property $\sigma(\sum \alpha_j\, u_j,\vph)\ge \sum\alpha_j\, \sigma(u_j,\vph)$ and hence are concave functionals on $\PSH_0$, while for the divisorial valuations $\cR_j$ the inequality becomes an equality. From this point of view, relation (\ref{eq:inf}) is similar to the representation of concave functions as lower envelope of linear ones, which holds on linear topological spaces.
This can be put into the picture of tropical analysis on psh singularities \cite{R7}, \cite{R9}.

\subsection{Analytic disks}
Another (although related) representation for the relative types can be given by means of analytic disks. To do so, we use arguments from \cite{LT}.
Given $u\in\PSH_0$ and $\vph\in \MW$, denote
\beq\label{eq:curve}
\sigma^*(u,\vph)=\inf_{\gamma\in \cA}\liminf_{\zeta\to 0}\frac{\gamma^*u(\zeta)}{\gamma^*\vph(\zeta)},\eeq
where $\cA$ is the collection of all analytic maps $\gamma:\D\to X$ such that $\gamma(0)=0$, and $\gamma^*u$ is the pullback of $u$ by $\gamma$.

Evidently, $\sigma^*(u,\vph)\ge\sigma(u,\vph)$ for any $\vph\in \MW$.
 If $f$ is a holomorphic function and $F$ a holomorphic mapping with isolated zero at $0$, then $\sigma^*(\log|f|,\log|F|)=\sigma(\log|f|,\log|F|)$ by \cite[Thm. 5.2]{LT}; moreover, in this case there exists a curve  $\gamma\in\cA$ such that
$$\sigma(\log|f|,\log|F|)=\liminf_{\zeta\to 0}\frac{\log|\gamma^*f(\zeta)|}{\log|\gamma^*F(\zeta)|},$$
see \cite[Prop. 5.4]{LT}. Similar arguments together with
Theorems~\ref{theo:anue} and \ref{theo:anue1} give us the following result.

\begin{theorem}\label{theo:ancurve} If $\psi\in\UAW$, then $\sigma^*(u,\psi)=\sigma(u,\psi)$
for every $u\in\PSH_0$. If, in addition, $\psi$ has analytic singularity, then the infimum in (\ref{eq:curve}) always attains.
\end{theorem}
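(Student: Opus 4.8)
The plan is to reduce the statement to the known analytic case via the approximation results established earlier. First, since $\psi \in \UAW$, Theorem~\ref{theo:anue1} (or, equivalently, the construction in Proposition~\ref{cor:cont2}) provides analytic weights $\vph_j = G_{\Df_{j!}\,\psi}$ whose Green functions decrease to $G_\psi$, and by Proposition~\ref{cor:cont2} one has $\sigma(u,\vph_j) \downarrow \sigma(u,\psi)$ for every $u \in \PSH_x$. The inequality $\sigma^*(u,\psi) \ge \sigma(u,\psi)$ is automatic, so the whole content is the reverse inequality $\sigma^*(u,\psi) \le \sigma(u,\psi)$. I would obtain this by a squeezing argument: for each $j$, the weight $\vph_j$ has analytic singularity, so the cited results from \cite{LT} (Thm.~5.2, Prop.~5.4) together with Theorem~\ref{theo:anue} give $\sigma^*(u,\vph_j) = \sigma(u,\vph_j)$. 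Since $\vph_j \le \psi + O(1)$ near $x$ (the Green functions of $\Df_{j!}\,\psi$ dominate $G_\psi$), for any analytic disk $\gamma \in \cA$ one has $\gamma^*\vph_j \le \gamma^*\psi + O(1)$ along $\gamma$, hence $\gamma^*u/\gamma^*\psi \ge (\gamma^*u/\gamma^*\vph_j)\cdot(\gamma^*\vph_j/\gamma^*\psi)$ with the last factor tending to something $\le 1$; taking $\liminf_{\zeta\to 0}$ and then $\inf_\gamma$ would give $\sigma^*(u,\psi) \ge \sigma^*(u,\vph_j) = \sigma(u,\vph_j)$ — but this is the wrong direction. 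So instead I would argue the other way: for the reverse inequality I want a disk $\gamma$ with $\gamma^*u/\gamma^*\psi$ small, and such disks come from the analytic approximants. Concretely, fix $\epsilon>0$; choose $j$ with $\sigma(u,\vph_j) \le \sigma(u,\psi) + \epsilon$; by the analytic case there is a disk $\gamma_j \in \cA$ realizing $\sigma(u,\vph_j) = \liminf_{\zeta\to 0}\gamma_j^*u/\gamma_j^*\vph_j$; since $\psi \le \vph_j + C_j$ near $x$ (because $G_\psi \le G_{\vph_j}$ forces the singularity of $\psi$ to be at least that of $\vph_j$, i.e. $\sigma(\psi,\vph_j)\ge 1$), one gets $\gamma_j^*\psi \le \gamma_j^*\vph_j + C_j$, so $\liminf \gamma_j^*u/\gamma_j^*\psi \le \liminf \gamma_j^*u/\gamma_j^*\vph_j = \sigma(u,\vph_j) \le \sigma(u,\psi)+\epsilon$, whence $\sigma^*(u,\psi) \le \sigma(u,\psi)+\epsilon$ for all $\epsilon$.

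For the addendum, when $\psi$ itself has analytic singularity the infimum is attained: here $\psi \in \AW$, so Theorem~\ref{theo:anue} applies directly to $\psi$ and the attainment is exactly \cite[Prop.~5.4]{LT} — I would simply invoke it, perhaps after passing to the equivalent maximal weight $\log|\xi|$ with $\xi$ the $n$ generic linear combinations of the components, as explained in the paragraph defining $\AW$. One should check that replacing $\psi$ by an $O(1)$-equivalent weight does not disturb the disk functional, which is clear since $\gamma^*\psi$ changes by $O(1)$ and the singularity of $\gamma^*\psi$ at $0$ (a negative number times $\log|\zeta|$ plus bounded, or more generally a subharmonic function with a $-\infty$ at $0$) dominates the bounded perturbation in the $\liminf$.

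The step I expect to be the main obstacle is the passage from the finite analytic level $j$ back to $\psi$ in the attainment-type argument: strictly, I only recover $\sigma^*(u,\psi) \le \sigma(u,\psi)$ as an infimum over the countable family of disks $\gamma_j$, not as a minimum, so for general $\psi \in \UAW$ one genuinely should not expect the infimum to be attained — and the theorem is correctly stated only to assert attainment in the analytic case. The delicate point is therefore making sure the inequalities $\psi \le \vph_j + O(1)$ (equivalently $\sigma(\psi,\vph_j)\ge 1$) are correctly extracted from the monotone convergence $G_{\vph_j} \downarrow G_\psi$; this is where I would be most careful, using $g_\psi = G_\psi$ for maximal $\psi$ together with the defining property of $G_{\vph_j}$ as a Green function, and invoking Proposition~\ref{prop:nonmax} if $\psi$ needs to be replaced by its greenification first. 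A secondary technical care is that the disk $\gamma_j$ provided by \cite{LT} for $u = \log|f|$ handles holomorphic $u$; for general plurisubharmonic $u$ one needs the version of the Lelong--Poincaré / pullback argument that the referenced "similar arguments" in the sentence preceding the theorem are meant to supply, so I would spell out that the disk functional is computed the same way for $\gamma^*u$ a subharmonic function on $\D$ with isolated $-\infty$ at $0$.
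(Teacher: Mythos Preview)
Your approach is essentially the same as the paper's: establish the analytic case first, then pass to $\psi\in\UAW$ via the monotone approximation $\sigma(u,\vph_j)\downarrow\sigma(u,\psi)$ from Proposition~\ref{cor:cont2} together with $\psi\le\vph_j+O(1)$ (which you correctly extract from $G_\psi\le G_{\vph_j}$ and $\psi\in\MW$). The one place the paper adds substance you only gesture at is the analytic case for general plurisubharmonic $u$: rather than citing \cite{LT} (which treats holomorphic $u$), the paper constructs the attaining curve directly by taking, for each exceptional prime $E_j$ of a log resolution of $F$, a curve $\hat\gamma_j\subset\hat X$ transversal to $E_j$ at a generic point where (by Siu's theorem) the Lelong number of $\mu^*u$ equals the generic Lelong number along $E_j$, and then pushing down via $\gamma_j=\mu\circ\hat\gamma_j$; this is exactly the ``similar arguments'' you anticipated needing. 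Your reduction step for the $\UAW$ case is in fact spelled out more fully than the paper's one-line reference to Theorem~\ref{theo:anue1}.
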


\begin{proof} For the case of analytic weight $\psi=\log|F|$ we will use the idea from the proof of \cite[Prop. 5.4]{LT}.
By Theorem~\ref{theo:anue}, $\sigma(u,\log|F|)=\min_j\, m_j^{-1}{\cal R}_j(u)$, where $m_j$ and ${\cal R}_j(u)$ are generic Lelong numbers of $\log|\mu^*F|$ and $\mu^*u$, respectively, along the exceptional primes $E_j$ of a log resolution $\mu$ for the mapping $F$.  Take a germ of an analytic curve $\hat\gamma_j\subset\hat X$ passing transversally through a point $\hat x_j\in E_j$ and such that the generic Lelong number of $\mu^*u$ along $E_i$ equals the Lelong number of the restriction of $\mu^*u$ to $\hat\gamma_j$ at $\hat x_j$ (which is possible by Siu's theorem). Then, for the curve $\gamma_j=\mu^*\hat\gamma_j$, we get the equalities
$$ {\cal R}_j(u)=\liminf_{\zeta\to 0}\frac{\gamma_j^*u\,(\zeta)}{\log|\gamma_j(\zeta)|},\quad
m_j=\lim_{\zeta\to 0}\frac{\log|\gamma_j^*F(\zeta)|}{\log|\gamma_j(\zeta)|},
$$
and so, $$\sigma(u,\log|F|)=\min_{j}\liminf_{\zeta\to 0}\frac{\gamma_j^*u\,(\zeta)}{\log|\gamma_j^*F(\zeta)|}.$$

For arbitrary $\psi\in\UAW$, we refer then to Theorem~\ref{theo:anue1}.
\end{proof}

\bigskip

{\small

{\bf Acknowledgment.} The author thanks the anonymous referee for valuable suggestions that have helped to improve the presentation.

}

\end{document}